\documentclass[11pt]{article}
\usepackage[margin=35mm]{geometry}

\usepackage{amsmath,amsfonts,amsthm,amstext,amssymb,mathtools}
\usepackage{bbm}
\usepackage{comment}
\usepackage{hyperref}
\usepackage{authblk}
\usepackage{thm-restate}

\usepackage[dvipsnames]{xcolor}
\hypersetup{
  linkcolor  = Bittersweet,
  citecolor  = MidnightBlue,
  urlcolor  = MidnightBlue,
  colorlinks = true
}

\let\emptyset\varnothing

\newcommand{\cF}{\mathcal{F} }
\newcommand{\nc}{\overline{N}}
\newcommand{\pr}{\mathbb{P} }
\newcommand{\E}{\mathbb{E} }
\newcommand{\N}{\mathbb{N} }
\newcommand{\R}{\mathbb{R} }
\newcommand{\mis}{\operatorname{MIS} }

\newtheorem{theorem}{Theorem}[section]
\newtheorem{conjecture}{Conjecture}[section]
\newtheorem{lemma}[theorem]{Lemma}

\newtheorem{question}[theorem]{Question}
\newtheorem{proposition}[theorem]{Proposition}

\newtheorem{claim}{Claim}
\newtheorem*{claim*}{Claim}

\theoremstyle{definition}
\newtheorem{definition}[theorem]{Definition}
\newtheorem{remark}[theorem]{Remark}

\newtheorem{observation}{Observation}
\newtheorem{fact}{Fact}

\author[1]{Joshua Cooper\thanks{cooper@math.sc.edu}}
\author[1]{Isaiah Hollars\thanks{isaiah.hollars@sc.edu}}

\affil[1]{University of South Carolina}

\title{Hitting all maximal independent sets in $c$-hollow graphs 
}
\date{\today}

\begin{document}
\maketitle

\begin{abstract}
   Fix a constant $c$ with $0<c<1$. We say a graph $G$ on $n$ vertices is $c$-hollow if every maximal independent set of $G$ has size at least $cn$. Denote by $\tau(G)$ the size of a smallest set of vertices $T\subseteq V(G)$ such that every maximal independent set in $G$ intersects $T$, i.e., $T$ is a transversal for the family of maximal independent sets. In 1991, Bollob\'{a}s, Erd\H{o}s, and Tuza conjectured that if $G$ is $c$-hollow, then $\tau(G)=o(n)$. Using a random construction, we show there exist $c$-hollow graphs with $\tau(G)=\Omega\left(\frac{n^{1/3}}{\log n }\right)$, establishing the first nontrivial lower bound constraining the conjecture and complementing a closely related lower bound due to Alon for {\em maximum} independent sets. We also show the conjecture holds in a strong form for the class of cographs and split graphs. 

\end{abstract}

\section{Introduction}
\subsection{Background}

Throughout, all graphs will be finite, simple, and undirected.  
\begin{definition}
A collection of vertices $I\subseteq V(G)$ is an \textit{independent set} if no edge of $G$ has both its ends in $I$. An independent set $I$ is \textit{maximal} if $I$ is not properly contained in a larger independent set. We use MIS as shorthand for maximal independent set and define \[
\mis(G)=\{I\subseteq V(G): \text{ $I$ is a maximal independent set of $G$}\}.
\] 
\end{definition}

\begin{definition}
    Let $0<c<1$. We say a graph $G$ on $n$ vertices is \textit{$c$-hollow} if $|I|\geq cn$ for all $I\in \mis(G)$. 
\end{definition}

\begin{definition}
Let $\mathcal{F}\subseteq 2^X$ be a collection of subsets of a set $X$. A set $T\subseteq X$ is a \textit{transversal}  (also called a \textit{hitting set} or a \textit{piercing set}) for $\mathcal{F}$ if $F\cap T \neq \emptyset$ for all $F\in \mathcal{F}$. The \textit{transversal number} $\tau(\mathcal{F})$ denotes the size of a smallest transversal for $\mathcal{F}$. Note that $\mis(G)$ is a set system with ground set $V(G)$. By a slight abuse of notation, we define\[
		\tau(G)\coloneqq \tau(\mis(G))
		\]
		to be the size of a smallest transversal for $\mis(G)$. 
\end{definition}

The following conjecture is the focus of this paper. 
\begin{conjecture}[Bollob\'{a}s, Erd\H{o}s, Tuza 1991]\label{conj-maximal}
Let $0<c<1$ be a constant. If $G$ is a $c$-hollow graph on $n$ vertices, then $\tau(G)=o(n)$. 
\end{conjecture}

This question appears to have been first raised in 1991 by Bollob\'{a}s, Erd\H{o}s, and Tuza in \cite{erdos3problems} and was later discussed in \cite[p.52]{chung1998erdos}, although the terminology ``$c$-hollow'' is not used there. Similar problems were also proposed in the 1992 paper \cite{erdos1992covering} of Erd\H{o}s, Tuza, and Gallai. Unpacking the $o(n)$ notation, the conjecture is equivalent to the statement: for every $c\in (0,1)$ and every $\epsilon>0$, there exists some $N=N(c,\epsilon)$ such that if $G$ is a $c$-hollow graph on $n\geq N$ vertices, then $\tau(G)\leq \epsilon n$. So, it is conjectured that large $c$-hollow graphs admit sublinear-sized (in $n$) transversals for $\mis(G)$.

We now give a few remarks illustrating why one might expect small MIS transversals in general for $c$-hollow graphs.

\begin{remark}
Take $n\equiv 0 \pmod 3$ and consider $G=\frac{n}{3}K_3$, the disjoint union of $\frac{n}{3}$ triangles. An independent set in $G$ contains at most one vertex per triangle. Each MIS of $G$ contains exactly one vertex per triangle. Moreover, each choice of one vertex per triangle yields a distinct MIS, so $|\mis(G)|=3^{n/3}$. It was shown independently by Moon and Moser in 1965 (\cite{moon1965}) and Miller and  Muller in 1960 (\cite{miller1960problem}) that the graph $G$ is the unique maximizer of $|\mis(G)|$ among all $n$-vertex graphs (with a slight alteration to the construction when $n\not\equiv 0\pmod 3$). The graph $G$ is $c$-hollow with $c=\frac{1}{3}$ since $|I| = \frac{n}{3}$ for all $I\in \mis(G)$. If $T$ consists of the 3 vertices of a triangle, then $I\cap T \neq \emptyset$ for all $I\in \mis(G)$. Hence, $T$ is a hitting set for $\mis(G)$ of size 3, which is certainly $o(n)$. Note that, more generally, if we consider $G = \frac{n}{t} K_t$ for $t > 3$ (and $t | n$), then $G$ is $1/t$-hollow and $\tau(G) = t$.  %More generally, for any graph $G$ and any $v\in V(G)$, the set $T=\{v\}\cup N(v)$ forms a transversal for $\mis(G)$. 
\end{remark}

\begin{remark}
Write $P_n$ for a path on $n$ vertices, and $G := \overline{P_n^t}$ for the complement of its $t$-th power, i.e., $V(G) = [n]$ and $ij \in E(G)$ whenever $|i-j| > t$.  Then, if $t = cn \in \mathbb{Z}$, it is straightforward to see that $G$ is $c$-hollow, but $\tau(G) \leq n/t  = 1/c$, because we may take $T = t \mathbb{Z} \cap [n]$ as a transversal.  As in the previous example, $\tau(G)$ is not only $o(n)$, but a constant.
\end{remark}

\begin{remark}
    Note that the fractional transversal number {\em does} satisfy $\tau^*(G) \leq 1/c$ for any $c$-hollow graph $G$.  The fractional transversal number is defined to be the minimum value of $\sum_{v \in V(G)} x_v$ subject to the constraints that $x_v \geq 0$ for every $v$ and $\sum_{v \in I} x_v \geq 1$ for each maximal independent set $I \subseteq V(G)$.  In a $c$-hollow graph, we may take $x_v = 1/(cn)$ for every $v$.
\end{remark}

We know of no published results directly concerning Conjecture~\ref{conj-maximal}. However, the following closely related conjecture by Bollob\'{a}s, Erd\H{o}s, and Tuza in the early 1990s has received recent attention and is discussed in \cite{chung1998erdos, erdos3problems}. The independence number of $G$, denoted $\alpha(G)$, is the size of a largest independent set of $G$. We say that $I\subseteq V(G)$ is a \textit{maximum independent set} if $|I|=\alpha(G)$.

\begin{conjecture}[Bollob\'{a}s, Erd\H{o}s, and Tuza]\label{conj-maximum} 
   Let $0<c<1$ be a constant. Let $h(G)$ denote the size of a smallest hitting set for the collection of \textit{maximum} independent sets of $G$. If $G$ is a graph on $n$ vertices with $\alpha(G)\geq cn$, then $h(G)=o(n)$.
\end{conjecture}

We remark that this ``maximum conjecture'' is neither a direct strengthening nor weakening of Conjecture~\ref{conj-maximal}. On the one hand, the maximum conjecture is only concerned with hitting the maximum independent sets, which is a strict subset of $\mis(G)$ in general. On the other hand, the hypothesis in Conjecture~\ref{conj-maximal} that $G$ is $c$-hollow is much stronger than only assuming $\alpha(G)\geq cn$. In 2021, Alon (\cite{alon2021hitting}) observed that an old result of Hajnal (\cite{hajnal1965theorem}) implies $h(G)= 1$ when $\alpha(G)>\frac{n}{2}$. Combining Hajnal's result with the container method, Alon showed that if $G$ is regular and $\alpha(G)\geq (\frac{1}{4}+\epsilon)n$, then $h(G)=O(\sqrt{n\log n})$. Alon also gave a construction with $h(G)=\Theta(\sqrt n)$ and $\alpha(G)\geq n/4$; this is the largest known value for $h(G)$ when $\alpha(G) = \Omega(n)$. Significant progress on this question occurred in the next few years. First, Hajebi, Li, and Spirkl (\cite{hajebi2024hitting}) showed that if $G$ contains no induced $P_5$, then $h(G)$ is bounded above by a function of the clique number $\omega(G)$. Next, Ai, Liu, Xu, and Zhou (\cite{ai2024piercing}) showed that if $G$ contains no induced matching of size $t$, then $h(G)\leq \omega(G)^{3t-3+o(1)}$. Cheng, Huang, Rong, and Xu (\cite{cheng2024sublinear}) showed Conjecture~\ref{conj-maximum} holds in several geometric graph families satisfying a certain ``locally sparse'' condition. In particular, they showed $h(G)=O(\frac{n}{\log n})$ for even-hole-free graphs with $\alpha(G)=\Omega(n)$. As a follow-up, Cheng and Xu showed in \cite{cheng2025bollobas} that $h(G)=o(n)$ for $K_{s,t}$-free graphs with $\alpha(G)=\Omega(n)$ using an elegant probabilistic argument. It is unclear if the methods of these publications can illuminate  Conjecture~\ref{conj-maximal}, since the arguments are tailored to analyze \textit{maximum} independent sets.

\subsection{Results and organization}

Our main result is a lower bound construction constraining Conjecture~\ref{conj-maximal}. 

\begin{theorem}\label{thm-janson-construction}
    Let $0<c<1$ be a constant. There exists a $c$-hollow graph $G$ on $n$ vertices with $\tau(G)=\Omega\left(\frac{n^{1/3}}{\log n }\right)$. 
\end{theorem}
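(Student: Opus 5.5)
The plan is to build $G$ from two parts. The first is a small ``core'' $A$ carrying a sparse random graph. The second is a large independent ``periphery'' $B$ whose vertices attach to random $k$-subsets of $A$.

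Concretely, let $|A|=m$ and let $G[A]=G(m,p)$. Let $B$ be an independent set of size $N$, chosen so that $N$ dominates $m$ (and hence $n=m+N$). Each $b\in B$ independently receives a uniformly random neighbourhood $N(b)\subseteq A$ with $|N(b)|=k$. The parameters $m,p,k,N$ are polynomials in $n$, to be optimized at the end. I expect the optimization to land at $t\approx n^{1/3}/\log n$.

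\textbf{Hollowness.} Any MIS $I$ has the form $S\cup\{b\in B: N(b)\cap S=\emptyset\}\cup(\text{nothing else in }B)$, where $S=I\cap A$ is independent in $G(m,p)$. Hence $|S|\le \alpha(G(m,p))=O(\frac{1}{p}\log(mp))$ whp. For a fixed $S$ of this size, a given $b$ is killed with probability at most $k|S|/m$. We choose $k\alpha/m=o(1)$. A Chernoff bound, union-bounded over the at most $m^{\alpha}$ independent sets $S$, then shows that every such $S$ kills only $o(N)$ vertices of $B$. This needs $N\gg \alpha\log m$. It follows that every MIS has size $(1-o(1))N\ge cn$ for any fixed $c<1$; smaller $c$ is automatic.

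\textbf{Lower bound on $\tau$.} Fix $T$ with $|T|\le t$. We must exhibit an MIS disjoint from $T$. First choose an independent set $S\subseteq A\setminus T$ in $G(m,p)$ that meets $N(b)$ for every $b\in T\cap B$; this ensures no vertex of $T\cap B$ lies in the MIS. Next add all surviving $b\in B$. Then extend greedily inside $A\setminus T$ by adding undominated vertices. The final set is independent and avoids $T$. It is maximal as long as every $a\in T\cap A$ is dominated. I will get this from a uniform whp statement: every $a\in A$ has about $Nk/m$ neighbours in $B$, and no independent set of size at most $\alpha$ kills all of them. This follows by the same Chernoff/union bound as above, provided $Nk/m\gg \alpha\log m$.

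\textbf{Main obstacle: constructing $S$.} The hard step is building $S$ uniformly over all choices of $T$. I would do it sequentially, picking $s_j\in N(b_j)\setminus T$ with $s_j$ non-adjacent to $s_1,\dots,s_{j-1}$. This requires $N(b_j)$ to contain a vertex outside $T$ and outside the $G(m,p)$-neighbourhood of the at most $t$ previously chosen vertices. That forbidden set has size about $t+ptm$, so we need $ptm<(1-\Omega(1))m$, i.e.\ $t\lesssim 1/p$. We also need $k$ large enough that the failure probability, roughly $(tp)^{k}$, survives a union bound over the $\binom{n}{t}$ sets $T$ and the choices of the $s_i$. The dependencies between the random graph on $A$ and the random sets $N(b)$ are what I would handle with Janson's inequality. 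Specifically, I would bound the probability that, for fixed $T$, no independent transversal of $\{N(b):b\in T\cap B\}$ exists in $A\setminus T$, treating the ``bad events'' as edges of $G(m,p)$ inside candidate transversals. This exponentially small bound then beats the $n^{t}$ union bound.

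\textbf{Balancing the constraints.} We need $t\approx 1/p$ and $k\alpha\approx k\log m/p\ll m$ from hollowness. We also need $t\log n\lesssim$ the Janson exponent, which is of order $k$-ish. Finally $N$ must be polynomially larger than $m$. Balancing these, I expect $p\approx m^{-1/2}$ and $N\approx m^{3/2}$, hence $t=\Omega(n^{1/3}/\log n)$.
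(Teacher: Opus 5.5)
\paragraph{Verdict.} Your route is genuinely different from the paper's, and its overall architecture works, with \emph{one exception}: the Janson step you propose for building $S$ would not give a strong enough bound. The sequential argument you sketch just before it does, once it is justified as below, and the parameters need adjusting.

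\paragraph{Comparison with the paper.} The paper takes $H=H_1+\dots+H_m$ with $H_i\cong mK_m$, so $|V(H)|=m^3$. It joins $H$ to an independent set $I$ of size $kn$ by random edges of probability $\log(1/\gamma)/m$. Hollowness comes from Hoeffding and a union bound over the $m^{m+1}$ maximal independent sets of $H$. For the lower bound, it fixes $T$ and keeps only the sets $\nc_I(J)$ with $J\in\mis(H_i)$ for the $H_i$ missed by $T$; the join structure makes these nearly independent. Janson's inequality then shows that a fixed $t$-subset of $I$ hits all of them with probability at most $e^{-(1-o(1))m/8}$, which beats $\binom{(k+1)n}{t}\le e^{m/9}$. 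That Janson exponent is only about $m=n^{1/3}$, and this is where the $1/3$ comes from.

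You instead exhibit, for each $T$, a maximal independent set avoiding it. You neutralize $T\cap B$ with an independent transversal inside a small random core. This needs only Chernoff bounds and sequential exposure, and it is not tied to the exponent $1/3$.

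\paragraph{Building $S$: drop Janson, justify the sequential argument.} There are no dependencies to handle: the sets $N(b)$ are independent of $G[A]$ and of each other by construction.
\begin{itemize}
\item Condition on the whp event $\Delta(G[A])\le 2pm$ and fix $T$.
\item Use a deterministic greedy rule. Let $F_j$ be $T\cap A$ together with $s_1,\dots,s_{j-1}$ and their neighbours, and let $s_j$ be the least vertex of $N(b_j)\setminus F_j$.
\item Then $F_j$ is determined by $G[A]$, $T$ and $N(b_1),\dots,N(b_{j-1})$, so $N(b_j)$ is a fresh uniform $k$-set. Hence
\[
\pr[N(b_j)\subseteq F_j]\le (|F_j|/m)^k\le (2tp+2t/m)^k\le 3^{-k}\qquad\text{for } t\le 1/(8p).
\]
\item A fixed $T$ therefore fails with probability at most $t\,3^{-k}$, with no union bound over the $s_i$. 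Also $n^t\,t\,3^{-k}\to 0$ once $k\ge 3t\log n$.
\end{itemize}
By contrast, suppose you fix the $N(b)$ and apply Janson over $G(m,p)$ to the number of independent transversals, with $s=|T\cap B|$ of order $t$. Pairs of transversals sharing two vertices already give $\Delta\gtrsim \mu^2 s^2/k^2$. So Janson yields at best $\exp(-O(k^2/t^2))=\exp(-O(\log^2 n))$ for $k\approx t\log n$, which cannot beat $\binom{n}{t}$. The exponent of order $k$ in your balancing comes only from the sequential argument.

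\paragraph{Parameters.}
\begin{itemize}
\item With $t\approx 1/p$ the logarithms matter. Taking $p\approx m^{-1/2}$ gives $k\,\alpha(G[A])\approx m\log^2 m$, which breaks hollowness. Take instead $p=C\log m/\sqrt m$, $t=1/(8p)$ and $k=\lceil 3t\log n\rceil$, so that $k\,\alpha(G[A])=O(m/C^2)$.
\item Let a small constant fraction of $B$ be killed. That is all $c$-hollowness needs; insisting on $o(N)$ costs an $\omega(1)$ factor in $t$.
\item $N$ need not be polynomially larger than $m$. Your $S$ has $|S|\le|T\cap B|\le t$, so the domination statement only needs a union bound over about $m^{t+1}$ pairs $(a,S)$. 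That requires $Nk/m\gg t\log m$, so $N=Km$ with $K=K(c)$ a large constant suffices for both domination and hollowness.
\end{itemize}
With your $N\approx m^{3/2}$, these fixes give the stated $\Omega(n^{1/3}/\log n)$. With $N=Km$, as far as I can check, the same argument gives $\tau(G)>t=\Omega(\sqrt n/\log n)$, which is stronger than the theorem.
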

Therefore, the $o(n)$ in Conjecture~\ref{conj-maximal} cannot be made asymptotically lower than $\frac{n^{1/3}}{\log n }$. The idea behind the construction is to build a graph $G$ where $|\mis(G)|$ is large and $\mis(G)$ resembles a uniformly distributed random set system. Alon showed in \cite{alon1990transversal} that $k$-uniform hypergraphs with uniformly random edges have large transversal numbers. Unlike Alon's hypergraphs, the random MIS's in our construction will only be approximately independent, enabling the application of Janson's inequalities (\cite{janson1998new}). 

%\begin{remark}
%It seems to us that Conjecture~\ref{conj-maximal} is not significantly easier to prove when $c$ is large (close to 1). The fact that our construction works for any $c\in (0,1)$ is evidence of this. On the other hand, Conjecture~\ref{conj-maximum} seems easier to prove for larger $c$; e.g., when $c>1/2$ Conjecture~\ref{conj-maximum} trivially holds (as previously discussed). 
%\end{remark}

Next, we prove Conjecture~\ref{conj-maximal} holds in a strong form for cographs and split graphs. A graph is a \textit{cograph} if it contains no induced path on 4 vertices. Disjoint unions of cliques and complete multipartite graphs are examples of cographs. We denote by $i(G)$ the size of a smallest MIS of a graph $G$, i.e., $c = i(G)/n$ is the largest value so that $G$ is $c$-hollow. Equivalently, $i(G)$ is the size of a smallest independent dominating set of $G$. In 1990, Tuza proved a strong upper bound on $\tau(G)$ for some graphs arising as complements of chordal graphs. We have restated Tuza's theorem, as it was originally formulated in terms of cliques.
\begin{theorem}[Tuza \cite{tuza1990covering}]
    If $\overline{G}$ is strongly chordal, then $\tau(G)\leq \frac{n}{i(G)}$. Moreover, $\tau(G)\leq \frac{n}{i(G)}$ if $\overline{G}$ is chordal and $i(G)=3$. 
\end{theorem}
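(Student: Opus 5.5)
Throughout, pass to the complement $H=\overline{G}$. The maximal independent sets of $G$ are exactly the maximal cliques of $H$, and $k:=i(G)$ is the minimum size of a maximal clique of $H$. So $\tau(G)$ is the clique-transversal number of $H$, and both statements say: if every maximal clique of $H$ has at least $k$ vertices, then some set of at most $n/k$ vertices meets every maximal clique. The fractional version is immediate, exactly as in the remark on $\tau^*$: put $x_v=1/k$ on every vertex. Then each maximal clique receives weight at least $1$, so
\[
\tau^*(\mis(G))\le \frac{n}{k}.
\]
The whole task is therefore to round this fractional solution to an integral one without loss, using the structure of $\overline{G}$.

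For the strongly chordal case I would use a polyhedral argument. I would invoke the known characterisation (Farber; Anstee--Farber; Lubiw) that a graph is strongly chordal if and only if its clique hypergraph is totally balanced. In particular the clique hypergraph is balanced, i.e.\ it has no odd cycle with no edge containing three of the cycle's vertices. By the theorem of Fulkerson--Hoffman--Oppenheim and Berge, for a balanced hypergraph the covering polytope $\{x\ge 0:\ \sum_{v\in Q}x_v\ge 1 \text{ for all edges } Q\}$ is integral, so $\tau=\tau^*$. Combined with the uniform fractional solution this gives $\tau(G)\le \lfloor n/k\rfloor$. If I wanted to avoid citing these results, I would instead give a direct greedy proof along a strong elimination ordering. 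Take a simple vertex $v$, whose neighbours have closed neighbourhoods forming a chain. Put its neighbour $u$ with the largest closed neighbourhood into $T$; discard the $\ge k$ vertices of the maximal clique $N[v]$; charge $u$ to those vertices; and then check that the maximal cliques not yet hit correspond to maximal cliques of a smaller strongly chordal graph that still has minimum maximal-clique size at least $k$.

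For the second statement ($\overline{G}$ chordal, $k=3$) balancedness fails, since chordal clique hypergraphs can contain odd "sun"-type cycles. Here I would argue by induction on $n$ using a clique tree of $H$. Take a leaf clique $C$ of the clique tree, and let $S=C\cap C'$ be its separator with the neighbouring clique $C'$. The vertices of $C\setminus S$ are simplicial and lie only in $C$, and $|C|\ge 3$.
\begin{itemize}
\item If $|C\setminus S|\ge 3$, or more generally $|C\setminus S|\ge 1$ with enough slack, I would delete $C\setminus S$ together with some vertex of $S$ placed into $T$, then show the remaining hitting problem reduces to a chordal graph on at most $n-3$ vertices whose uncovered maximal cliques still have size $\ge 3$.
\item Otherwise I would do case analysis on the few configurations with $|C\setminus S|\in\{1,2\}$, pairing two leaf cliques that share the same separator vertex where needed.
\end{itemize}
The main obstacle is this step. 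Deleting vertices can shrink other maximal cliques to size $2$, or make non-maximal cliques maximal, which breaks the induction hypothesis. So I expect to have to strengthen the hypothesis, for example to ``every maximal clique not yet hit has at least $3$ vertices'' in a hypergraph of residual cliques. Then the key lemma becomes: every leaf configuration allows removing at least $3$ vertices per transversal vertex added.
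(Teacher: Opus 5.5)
The paper does not prove this theorem; it only quotes it from Tuza, so your proposal has to be judged on its own.

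\textbf{Strongly chordal case.} Your polyhedral argument is correct. By Farber's characterization, the vertex--maximal-clique incidence matrix of a strongly chordal graph is totally balanced, hence balanced, and balancedness is invariant under transposition. So Fulkerson--Hoffman--Oppenheim (or Berge's theorem that balanced hypergraphs have the K\H{o}nig property) gives $\tau(G)=\tau^*\le n/i(G)$. Two minor corrections:
\begin{itemize}
\item In the definition of balanced, the edge containing three vertices of the odd cycle must be one of the cycle's own edges. Under your wording the $3$-sun (triangle $u_1u_2u_3$, plus $w_i$ adjacent to $u_i,u_{i+1}$) would count as balanced. Its cycle through $T_i=\{u_i,u_{i+1},w_i\}$ has all its vertices in the central triangle, yet $\tau=2>3/2=\tau^*$.
\item Your greedy fallback fails as literally stated. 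Deleting $N[v]$ can turn a clique $Q\ni u$ into a small maximal clique $Q\setminus N[v]$ of $H-N[v]$. It works if you induct on an arbitrary subfamily $\mathcal{Q}$ of maximal cliques of size at least $k$. If $N[v]\notin\mathcal{Q}$, delete $v$. Otherwise, any maximal clique $Q$ meeting $N[v]$ in some $w$ satisfies $Q\subseteq N[w]\subseteq N[u]$, so $u\in Q$. Hence the members of $\mathcal{Q}$ missed by $u$ are maximal cliques of $H-N[v]$, and $|N[v]|\ge k$.
\end{itemize}

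\textbf{Chordal case with $i(G)=3$: genuine gap.} Here the decisive step is announced rather than proved.

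First, your reduction for large $|C\setminus S|$ only works for a subfamily version of the statement, because $H-(C\setminus S)-s$ may have maximal cliques of size $2$. That version is genuinely stronger than Tuza's theorem: a subfamily of maximal cliques, such as $\{T_1,T_2,T_3\}$ in the $3$-sun, need not be the clique hypergraph of any graph. You would have to formulate it and prove that it is true and stable under your reductions.

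Second, and more seriously, $|C\setminus S|=1$ is not a leftover configuration but the typical one. In a $2$-tree every leaf clique is a triangle meeting its neighbour in an edge. For the square of a path, with maximal cliques $\{i,i+1,i+2\}$, one has $\tau=\lfloor n/3\rfloor$, so there is no slack at all. Placing a separator vertex $s$ in $T$ and deleting $(C\setminus S)\cup\{s\}$ then discards only two vertices. In that example, $s=2$ discards only $\{1,2\}$, whereas $s=3$ also makes vertex $2$ irrelevant and discards $\{1,2,3\}$.

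What is missing is an explicit rule for choosing the transversal vertex. One option is to handle at once all leaf cliques attached to a deepest non-leaf clique of a rooted clique tree. You also need a proof that the rule always discards at least three vertices per chosen vertex while keeping the induction hypothesis intact. That lemma is the whole content of the $i(G)=3$ case, and it is where $k=3$ must be used; note the chordal statement is only claimed for $i(G)=3$. The phrases ``case analysis'' and ``pairing two leaf cliques'' do not supply it.
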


Our result for cographs is analogous to Tuza's theorem. 
    \begin{restatable*}{corollary}{restatecograph}
		\label{corollary-cograph}
	 If $G$ is a cograph, then $\tau(G)\leq \frac{n}{i(G)}$. In particular, if $G$ is a $c$-hollow cograph, then $\tau(G)\leq \frac{1}{c}$. 
	\end{restatable*}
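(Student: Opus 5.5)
I will prove the stronger statement $\tau(G)\le \lfloor n/i(G)\rfloor$ by induction on $n$, using the standard recursive structure of cographs: every cograph on at least two vertices is either disconnected (a disjoint union $G_1\cup G_2$ of smaller cographs) or its complement is disconnected (a join $G_1\vee G_2$ of smaller cographs). The base case $n=1$ is trivial: $i(G)=1$ and the single vertex is a transversal. The second sentence of Corollary~\ref{corollary-cograph} then follows immediately, since $c$-hollowness gives $i(G)\ge cn$.

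\emph{Disjoint union.} If $G=G_1\cup G_2$ with $n_j=|V(G_j)|$, then every MIS of $G$ is $I_1\cup I_2$ with $I_j\in\mis(G_j)$, and conversely. Hence any transversal of $\mis(G_1)$ is a transversal of $\mis(G)$, and also $i(G)=i(G_1)+i(G_2)$. By induction $\tau(G)\le \min_j n_j/i(G_j)$. So it suffices to show $\min_j n_j/i(G_j)\le (n_1+n_2)/(i(G_1)+i(G_2))$. This is the mediant inequality: the mediant of two fractions lies between them, so it is at least the smaller one.

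\emph{Join.} If $G=G_1\vee G_2$, then every independent set lies entirely inside $V(G_1)$ or inside $V(G_2)$. Therefore $\mis(G)=\mis(G_1)\cup\mis(G_2)$ and $i(G)=\min(i(G_1),i(G_2))$. Taking the union of optimal transversals of the two sides gives
\[
\tau(G)\le \tau(G_1)+\tau(G_2)\le \frac{n_1}{i(G_1)}+\frac{n_2}{i(G_2)}\le \frac{n_1+n_2}{\min(i(G_1),i(G_2))}=\frac{n}{i(G)}.
\]
Since $\tau(G)$ is an integer, the floor follows.

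This step is the one I expected to be the main obstacle, since transversals add across a join. It closes because the denominator also drops to the minimum, so the bound still adds correctly. Keeping the bound as the real number $n/i(G)$, rather than the floor, in the induction hypothesis avoids any rounding losses in both cases.
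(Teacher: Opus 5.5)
Your proof is correct and takes essentially the same route as the paper: induction along the disjoint-union/join structure of cographs, using $\mis(G_1\cup G_2)=\{I_1\cup I_2: I_j\in\mis(G_j)\}$ and $\mis(G_1+G_2)=\mis(G_1)\cup\mis(G_2)$. Your mediant inequality and your min-denominator bound are exactly the paper's inequality that $\beta(G)=\tau(G)i(G)/n$ is at most a convex combination of $\beta(G_1)$ and $\beta(G_2)$, which gives $\beta(G)\le\max(\beta_1,\beta_2)\le\beta(K_1)=1$.
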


Lastly, we consider split graphs. A graph $G$ is a \textit{split graph} if $V(G)$ can be partitioned into a clique and an independent set.

\begin{restatable*}{theorem}{restatesplit}
		\label{thm-splitgraph}
Let $0<c<1$ and let $n$ be sufficiently large. If $G$ is a $c$-hollow split graph on $n$ vertices, then $\tau(G)\leq \frac{-1}{\log(1-c)}\log n+1$. Moreover, there exist $c$-hollow split graphs with $\tau(G) \geq (1-o(1))\frac{-1}{\log(1-c)}\log n$.
\end{restatable*}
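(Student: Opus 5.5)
Let $V(G)=K\cup S$ with $K$ a clique and $S$ an independent set; we may assume $S$ is maximal, so every $v\in K$ has a neighbor in $S$. Every MIS $I$ of $G$ has one of two forms. Either $I=S$, or $I=\{v\}\cup(S\setminus N(v))$ for a unique $v\in K$, since $I$ meets $K$ in at most one vertex. The second form is maximal exactly when no vertex of $K$ other than $v$ is non-adjacent to every vertex of $S\setminus N(v)$, and any maximal $I$ has this shape.

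\emph{Reformulation.} Set $A_v=S\setminus N_S(v)$ for each $v\in K$. A set $T$ is a transversal iff $T\cap S\neq\emptyset$ and, for every $v\in K$ that gives a MIS, either $v\in T$ or $T\cap A_v\neq\emptyset$. The $c$-hollow hypothesis gives $|A_v|+1\ge cn$ for every $v$ that gives a MIS, and $|S|\ge cn$. We will take $T\subseteq S$, so we only need to hit the sets $A_v$ of the relevant $v\in K$; there are at most $|K|\le n$ of them, each of size at least $cn-1$.

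\emph{Upper bound.} We run the standard greedy/probabilistic transversal argument on the family $\{A_v\}$ inside ground set $S$ with $|S|\le n$. Choose points of $S$ uniformly at random, independently, $t$ times. A fixed $A_v$ is missed with probability at most $(1-|A_v|/|S|)^t\le(1-c+1/n)^t$. The expected number of missed sets is therefore at most $n(1-c')^t$. This is below $1$ once $t\ge \log n/(-\log(1-c'))$, and the extra $+1$ in the statement absorbs rounding. The delicate point is that the $-1$ in $|A_v|\ge cn-1$ perturbs $c$ to $c'=c-1/n$. To handle it I would instead use the greedy argument with exact counts, noting $|A_v|\ge cn-1$ and $|S|\le n$. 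Each greedy step keeps at most a $(1-c+1/n)$ fraction of the unhit sets, and $\log(1-c+1/n)=\log(1-c)+O(1/n)$, so the loss is $O(\log n/n)$, which is absorbed in the $+1$ for large $n$. Also, $T\subseteq S$ nonempty automatically hits $S$.

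\emph{Lower bound.} Let $S=[m]$ and let $K$ be indexed by all $k$-subsets $B\subseteq S$ with $k\approx(1-c)m$. Make vertex $v_B$ adjacent to all of $K$ and to exactly the elements of $B$, so that $A_{v_B}=S\setminus B$. Then $n=m+\binom{m}{k}$, so $\log n\approx m\,H(1-c)$, which is linear in $m$.

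That choice of $K$ is too big, however. With $|K|$ exponential, $c$-hollowness fails, since $|A_v|=cm$ is far smaller than $cn$. So the roles must be reversed: take $|K|$ small and $S$ large. Let $S$ have size about $n$ and let $K$ have size $\ell=o(n)$, with each $A_v$ a random subset of $S$ of density about $c$. Every MIS then has size at least about $cn$. I would add a few extra vertices, or check directly, to ensure each $\{v\}\cup A_v$ is maximal: two random sets are not nested, so no vertex of $K$ other than $v$ misses all of $A_v$. A transversal $T$ contains $T\cap K$, which covers only those $v$, while $T\cap S$ must hit the rest. For a fixed $R\subseteq S$ of size $t$, the probability that all but $t$ of the $A_v$ meet $R$ is about $\binom{\ell}{t}(1-(1-c)^t)^{\ell-t}$. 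A union bound over the $n^t$ sets $R$ fails as long as $\ell(1-c)^t\gg t\log n$. Taking $\ell=n/\log^2 n$ then gives $t\ge(1-o(1))\log n/(-\log(1-c))$. The main obstacle is this union-bound calculation, together with ensuring maximality for every $v$ with only a $o(n)$ loss in hollowness.
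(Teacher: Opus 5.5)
Your strategy is the paper's. The upper bound runs greedy/averaging on the non-neighbourhoods $A_v$, and the lower bound uses a random split graph with an $o(n)$-sized clique plus a union bound over candidate transversals.

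\textbf{Upper bound: the $+1$ is used twice.} This is the one step that does not go through as written: your treatment of $|A_v|\ge cn-1$. Greedy (or random sampling) gives $\tau(G)\le\lfloor x\rfloor+1$ with $x=\log|\mathcal{F}|/(-\log(1-c'))$. So the $+1$ in the statement is entirely consumed by integer rounding. It cannot also absorb the excess $x-\frac{\log n}{-\log(1-c)}=\Theta(\log n/n)$ that comes from $c'=c-1/n$ together with $|\mathcal{F}|\le n$. For infinitely many $n$, $\frac{\log n}{-\log(1-c)}$ lies within that distance below an integer. For those $n$ your bound is $\lfloor\frac{\log n}{-\log(1-c)}\rfloor+2$, which exceeds the claim. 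Greedy ``with exact counts'' does not change this as long as you use $|S|\le n$.

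The repair is short. Since $|S|=n-|K|$:
\begin{itemize}
\item if $|K|\ge 1/c$, then $(cn-1)/|S|\ge c$, and the greedy runs with ratio exactly $c$;
\item if $|K|<1/c$, then $K\cup\{u\}$ with $u\in S$ is already a transversal of size below $\frac{\log n}{-\log(1-c)}+1$ for large $n$.
\end{itemize}
The paper makes this same split, with threshold $\frac{\log n}{-\log(1-c)}$ instead of $1/c$. Alternatively, $|\mathcal{F}|\le|K|=n-|S|\le(1-c)n$ lowers $x$ by a full unit, which leaves room for the perturbation.

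\textbf{Lower bound.} Your computation is sound. With $\ell=n/\log^2 n$ and $t=(\log n-5\log\log n)/(-\log(1-c'))$, the union bound gives $\exp(O(\log^2 n)-(\log n)^3)\to 0$. Paying an extra factor $\binom{\ell}{t}$ for clique vertices in $T$ is a perfectly good substitute for the paper's exchange argument reducing to $T\subseteq I$. Two corrections, though.
\begin{itemize}
\item Your maximality criterion is wrong, and the worry is vacuous. Every $w\in K\setminus\{v\}$ is adjacent to $v$, so $\{v\}\cup A_v$ is always a maximal independent set. No non-nesting check or extra vertices are needed.
\item $c$-hollowness must hold exactly, so the $o(1)$ slack has to go into the density, not into the hollowness. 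With $|S|=n$, take density $c'=c(1+\ell/n)+\sqrt{\log n/n}$; the paper makes the analogous choice for $|K|=n/\log n$. Then Hoeffding and a union bound over $K$ give $|A_v|\ge c|V(G)|$ for every $v$ whp. Since $c'\to c$, the resulting $\log n/(-\log(1-c'))$ is still $(1-o(1))\frac{\log n}{-\log(1-c)}$.
\end{itemize}
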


The upper bound follows by constructing a transversal using a greedy heuristic. The lower bound construction is a random split graph analogous to the random uniform hypergraphs in \cite{alon1990transversal}.

The rest of the paper is organized as follows. Section~\ref{sec-preliminaries} contains preliminary definitions. In Section~\ref{sec-hollowify}, we establish a key lemma for Theorem~\ref{thm-janson-construction}, which we then prove in Section~\ref{sec-mainproof}. Our results on cographs and split graphs are in Sections~\ref{sec-cographs} and \ref{sec-splitgraphs} respectively. We conclude in Section~\ref{sec-discussion} with open questions and future directions.

\section{Preliminaries}\label{sec-preliminaries}

Let $G$ be a graph and let $A, B\subseteq V(G)$. It will be helpful to refer to the set of common non-neighbors of $A$ in $B$. 
\begin{definition}
We denote by $\nc_B(A)$ the set of common non-neighbors of $A$ in $B$. Explicitly, \[
    \nc_B(A)\coloneqq \{v\in B: \forall a\in A, \,va\not\in E(G)\}.
    \]
In particular, if $A$ and $B$ are independent sets, then $A\cup \nc_B(A)$ is also independent. If $A=\{v\}$ is a single vertex, we omit the set brackets and write $\nc_B(v)$. 
\end{definition}

\begin{definition}
    Let $G_1$ and $G_2$ be graphs. The \textit{disjoint union} of $G_1$ and $G_2$, denoted $G_1 \cup G_2$, is the graph with vertex set $V(G_1)\sqcup V(G_2)$ and edge set $E(G_1)\sqcup E(G_2)$. The \textit{join} of $G_1$ and $G_2$ is the graph obtained by adding all edges between $V(G_1)$ and $V(G_2)$ to the graph $G_1\cup G_2$. We use $G_1+G_2$ to denote the join of $G_1$ and $G_2$. 
\end{definition}

\begin{definition}
    Given a graph $G$, the parameter $i(G)= \min\{|I|: I\in \mis(G)\}$ is defined as the size of a smallest MIS of $G$. Equivalently, $i(G)$ is the size of a smallest independent dominating set of $G$, which is where the notation originates. See \cite{goddard2013independent} for a survey on this parameter. 
\end{definition}
\begin{definition}
    Given a graph $G$ on $n$ vertices, we define the parameter $\beta(G)=\frac{\tau(G)i(G)}{n}$. The parameter $\beta(G)$ behaves nicely under disjoint unions and joins (see Section~\ref{sec-cographs}) and it allows for a reformulation of Conjecture~\ref{conj-maximal} (see Section~\ref{sec-discussion}).  
\end{definition}

In our constructions for Theorem~\ref{thm-janson-construction} and \ref{thm-splitgraph}, we employ standard probabilistic arguments. We refer the reader to \cite{alon2016probabilistic} for more explanations and examples of probabilistic techniques in combinatorics. Given a sequence of events $\{A_n\}_{n\in \N}$ in associated probability spaces $(\Omega_n,\mathcal{A}_n,\pr_n)_{n\in \N}$, we say that $A_n$ holds with high probability (whp) if $\pr_n(A_n)=1-o_{n\to\infty}(1)$. When referring to random variables $X_1, X_2, \dots, X_n$, we use \textit{iid} as shorthand for independent and identically distributed. We will need the following special case of Hoeffding's inequality.
\begin{theorem}[Hoeffding 1963 \cite{hoeffding1963probability}]
    Let $X=X_1+X_2+\dots+X_n$ be the sum of $n$ iid Bernoulli random variables. Then for any positive real $\lambda>0$, 
    \[
    \pr\bigg[\pm (X-\E[X]) \geq \lambda \bigg]\leq \exp\left(\frac{-2\lambda^2}{n}\right).
    \]
\end{theorem}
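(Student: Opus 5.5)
The statement is the standard Chernoff–Hoeffding tail bound, and I would prove it by the exponential moment method, with Hoeffding's lemma as the key step. Write $p=\E[X_i]$ and $Y_i = X_i - p$, so that $X-\E[X]=\sum_{i=1}^n Y_i$. I treat the upper tail first. For any $t>0$, Markov's inequality applied to $e^{t(X-\E[X])}$ gives
\[
\pr\big[X-\E[X]\geq \lambda\big]\leq e^{-t\lambda}\,\E\big[e^{t(X-\E[X])}\big]=e^{-t\lambda}\prod_{i=1}^n \E\big[e^{tY_i}\big].
\]
The factorization uses the independence of the $X_i$.

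The key step is Hoeffding's lemma for a single centered Bernoulli variable: I will show $\E[e^{tY_i}]\leq e^{t^2/8}$ for every real $t$. Since $X_i$ is Bernoulli, the moment generating function is explicit:
\[
\E[e^{tY_i}]=e^{-tp}(1-p+pe^t)=e^{L(t)}, \qquad L(t)=-tp+\log(1-p+pe^t).
\]
One checks directly that $L(0)=0$ and $L'(0)=-p+p=0$. Next put $q(t)=\frac{pe^t}{1-p+pe^t}\in[0,1]$. A short computation gives $L'(t)=-p+q(t)$ and $L''(t)=q(t)(1-q(t))\leq \tfrac14$. Taylor's theorem with Lagrange remainder then yields $L(t)=\tfrac12 L''(\xi)t^2\leq t^2/8$ for some $\xi$ between $0$ and $t$. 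This second-derivative bound is the only real content and is where I expect the main care to be needed. It is easy here because the Bernoulli case makes the MGF explicit, so no convexity argument over general $[0,1]$-valued variables is required.

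Plugging the lemma into the first display gives $\pr[X-\E[X]\geq\lambda]\leq \exp(-t\lambda+nt^2/8)$ for every $t>0$. The exponent is minimized at $t=4\lambda/n$, where it equals $-2\lambda^2/n$, which is the claimed bound for the $+$ sign.

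For the $-$ sign, I apply the same argument to the iid Bernoulli variables $X_i'=1-X_i$, whose sum is $X'=n-X$. This works because $X'-\E[X']=-(X-\E[X])$, so the lower tail of $X$ is exactly the upper tail of $X'$, and the bound $\exp(-2\lambda^2/n)$ follows.
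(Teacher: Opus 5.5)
Your proof is correct. The moment generating function computation, the identity $L''(t)=q(t)(1-q(t))\leq \tfrac14$, the choice $t=4\lambda/n$, and the reduction of the lower tail to the upper tail via $X_i'=1-X_i$ all check out.

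The paper gives no proof of this statement; it simply cites Hoeffding's 1963 paper. There the result is proved for independent summands with $a_i\leq X_i\leq b_i$ by the same exponential-moment method. One first uses convexity of $x\mapsto e^{tx}$ to bound $\E[e^{tX_i}]$ by the moment generating function of a two-point variable on $\{a_i,b_i\}$ with the same mean. One then controls its logarithm by the same second-derivative estimate you carry out. Restricting to Bernoulli summands lets you skip the convexity reduction, because the two-point moment generating function is already explicit.

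The citation buys generality the paper never needs. Your argument is self-contained and covers every use of the inequality here: Lemma~\ref{lemma-rand-hollowifier} and the random split graph in the appendix.

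You also never use that the $X_i$ are identically distributed. Each $Y_i$ satisfies $\E[e^{tY_i}]\leq e^{t^2/8}$ whatever its own mean is, so your argument gives the same bound for independent Bernoulli variables with different parameters.
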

Theorem~\ref{thm-janson-construction} and Theorem~\ref{thm-splitgraph} both use the following random graph construction.

\begin{definition}
    Let $G_1$ and $G_2$ be graphs and let $0<p<1$. The random graph $G_1 +_p G_2$ is obtained by adding the edges $\{vw : v \in V(G_1), w \in V(G_2)\}$ iid with probability $p$ to the graph $G_1\cup G_2$.
\end{definition}

Given a nondecreasing sequence $(a_n)_{n\in \N}$ converging to $a$, we write $a_n\nearrow a$. For a positive integer $n\in \N_+$, we use $[n]$ to denote the set $\{1,2,\dots, n\}$. To simplify the presentation, we omit floors and ceilings when they are not crucial to the argument. Throughout the paper, $\log(\cdot)$ always refers to the natural logarithm.

\section{A random $c$-hollow graph construction}\label{sec-hollowify}

The goal of this section is to show the construction for Theorem~\ref{thm-janson-construction} is indeed $c$-hollow. Fix a graph $H$. The following lemma shows that, for an appropriately sized independent set $I$ and probability $p$, the random graph $G\coloneqq H+_p I$ is $c$-hollow whp. 

\begin{lemma}\label{lemma-rand-hollowifier}
    Let $H$ be a graph on $n$ vertices with $|\mis(H)|=\exp\left(o(n) \right)$ and $\alpha(H)\to \infty$ as $n\to \infty$. Let $0<c<1$ be a constant. Choose $k\in \N_+$ and $0<\epsilon<1$ so that $\gamma \coloneqq \frac{k+1}{k}\frac{1}{1-\epsilon}c <1$ (e.g., take $k=\lceil\frac{3c}{1-c}\rceil$ and $\epsilon= \frac{1-c}{3}$). Let $I$ be an independent set of size $kn$. Put $p=\frac{\log (1/\gamma)}{\alpha(H)}$. Then $G \coloneqq H+_p I$ is a $c$-hollow graph whp as $n\to\infty$. 
\end{lemma}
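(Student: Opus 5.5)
Every MIS $J$ of $G$ splits as $J = J_H \cup J_I$ with $J_H = J\cap V(H)$ independent in $H$ and $J_I = \nc_I(J_H)$. Indeed, $J\cap I$ must consist of all vertices of $I$ with no neighbor in $J_H$, since $I$ is independent and $J$ is maximal. The total number of vertices is $N = n + kn = (k+1)n$, so $G$ is $c$-hollow exactly when $|J|\ge c(k+1)n$ for all such $J$. It therefore suffices to show that whp, for every independent set $S$ of $H$, $|\nc_I(S)| \ge c(k+1)n$. The key simplification is that $\nc_I(S)\supseteq \nc_I(S')$ whenever $S\subseteq S'$. Every independent set of $H$ extends to some $M\in\mis(H)$, so it is enough to handle the at most $|\mis(H)| = \exp(o(n))$ maximal independent sets $M$ of $H$.

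Fix $M\in \mis(H)$, with $|M|\le \alpha(H)$. Each $v\in I$ is independently a common non-neighbor of $M$ with probability $(1-p)^{|M|}\ge (1-p)^{\alpha(H)}$. Because $p=\log(1/\gamma)/\alpha(H)\to 0$ (using $\alpha(H)\to\infty$), we have $(1-p)^{\alpha(H)} = \exp(-p\alpha(H)(1+O(p))) = \gamma^{1+o(1)}$. In particular this is at least $\gamma^{1+\delta}$ for any fixed small $\delta>0$ once $n$ is large. Thus $|\nc_I(M)|$ stochastically dominates a sum of $kn$ iid Bernoulli variables with mean $\ge \gamma^{1+\delta}kn$.

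The target threshold is $c(k+1)n = (1-\epsilon)\gamma k n$, by the definition of $\gamma$. Choose $\delta$ so that $\gamma^{\delta} > 1-\epsilon/2$; this is possible since $\gamma<1$ is fixed. Then the mean exceeds the threshold by at least $\lambda = (\epsilon/2)\gamma k n = \Theta(n)$. Hoeffding's inequality gives
\[
\pr\bigl[|\nc_I(M)| < c(k+1)n\bigr] \le \exp\left(-2\lambda^2/(kn)\right) = \exp(-\Theta(n)).
\]
A union bound over $\mis(H)$ bounds the failure probability by $\exp(o(n))\exp(-\Theta(n)) = o(1)$. So whp every MIS of $G$ has size at least $|\nc_I(J_H)| \ge c|V(G)|$.

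The main obstacle is the reduction to maximal independent sets of $H$, namely the monotonicity observation. A naive union bound over all independent sets $S$ of $H$ could involve exponentially many sets, which would swamp the $\exp(-\Theta(n))$ bound. The second point needing care is the constant bookkeeping: the asymptotic $(1-p)^{\alpha(H)}\to\gamma$ uses $\alpha(H)\to\infty$ rather than being exact, and the slack factor $1-\epsilon$ built into $\gamma$ is what absorbs both that error and the Hoeffding deviation.
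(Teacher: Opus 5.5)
Your proof is correct and follows the paper's argument essentially step for step: you decompose each MIS of $G$ as $J_H\cup\nc_I(J_H)$, use monotonicity to reduce to $\mis(H)$, bound $(1-p)^{|M|}\ge(1-p)^{\alpha(H)}\to\gamma$, apply Hoeffding with linear slack supplied by the factor $1-\epsilon$, and finish with a union bound over the $\exp(o(n))$ sets in $\mis(H)$. The only cosmetic difference is how the $o(1)$ error is absorbed: you fix an explicit $\delta$ with $\gamma^{\delta}>1-\epsilon/2$, whereas the paper simply carries a deviation of $(\epsilon-o(1))\gamma kn$.
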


\begin{proof}
Notice that each MIS in $G$ is of the form $J\cup \nc_I(J)$ for some independent set $J$ of $H$ (we adopt the convention that the empty set is independent and $\nc_I(\emptyset)=I$). Since $J_1\subseteq J_2\implies |\nc_I(J_2)|\leq |\nc_I(J_1)|$, it suffices to show that the event \[
\bigwedge_{J\in \mis(H)}\{|\nc_I(J)|\geq c|V(G)|\}\]
occurs with high probability. Fix some maximal independent set $J\in \mis(H)$. Let $X$ denote the random variable $|\nc_I(J)|$. Then $X$ is sampled from a binomial distribution with $kn$ trials and success probability \begin{align*}
    (1-p)^{|J|}&= \left(1-\frac{\log (1/\gamma)}{\alpha(H)} \right)^{|J|} \geq \left(1-\frac{\log (1/\gamma)}{\alpha(H)} \right)^{\alpha(H)}\\
    &\to \exp\left(-\log(1/\gamma) \right) = \gamma \quad\text{as $n\to\infty$}.
    \end{align*}
\begin{comment}
    since $(1+\delta/s)^s \rightarrow \exp(\delta)$ if $s \rightarrow \infty$ and $\delta^2/s \rightarrow 0$, and indeed in this case, $s = \alpha(G) \rightarrow \infty$ and
$$
\delta = \log(\gamma) \leq \log c + 1/k + 2 \epsilon = O(1).
$$
\end{comment}
Then $\E[X]\geq  (1-o(1)) \gamma kn$. By Hoeffding's inequality, we have \begin{align*}
    \pr\big[X< (1-\epsilon)\gamma kn\big] &\leq 
    \pr\bigg[ \E[X]-X\geq  (\epsilon-o(1)) \gamma kn \bigg] \\
    &\leq \exp\bigg( -2\big((\epsilon-o(1)) \gamma kn\big)^2/kn \bigg)=\exp\left(-\Omega(n) \right).
\end{align*}
Using that $\gamma = \frac{k+1}{k}\frac{1}{1-\epsilon}c$, we have \[
\frac{(1-\epsilon)\gamma kn}{|V(G)|}=\frac{(1-\epsilon)\gamma kn}{(k+1)n}=\frac{k}{k+1}(1-\epsilon)\cdot \frac{k+1}{k}\frac{1}{1-\epsilon}c=c,
\]
so the bad event $\{|\nc_I(J)|<c|V(G)|\}$ happens with probability at most $\exp\left(-\Omega(n) \right)$. By assumption, $|\mis(H)|=\exp\left(o(n)\right)$. Therefore, applying the union  bound over each $J\in \mis(H)$ gives the result. 
\end{proof}

\section{Proof of Theorem~\ref{thm-janson-construction}}\label{sec-mainproof}

We begin with high-level overview of the proof. Put $m=n^{1/3}$. Let $H$ be the join of $m$ many disjoint unions of $m$ many cliques of size $m$, a graph with $n$ vertices. Consider the random graph $G=H+_p I$, where $p$ is chosen as in Lemma~\ref{lemma-rand-hollowifier} to guarantee that $G$ is $c$-hollow whp.

We prove that the random graph $G$ satisfies $\tau(G)>t\coloneqq \frac{n^{1/3}}{10\log n}$ with high probability. The proof strategy is similar to \cite{alon1990transversal}. We start by fixing an arbitrary set $T\subseteq V(G)$ with $|T|=t$. The goal is to show that, for a random $G$,  the probability that $T$ is a transversal for $\mis(G)$ is $o\left(1/\binom{|V(G)|}{t}\right)$. Therefore, the probability that $\tau(G)\leq t$ tends to 0 by applying the union bound over all possible transversal sets $T\subseteq V(G)$ with $|T|=t$. 

Recall that each MIS in $G$ is of the form $J\cup \nc_I(J)$ for some independent set $J$ of $H$. If $T$ is a transversal for $\mis(G)$, then $T$ is also a transversal for \[
\cF=\cF(T)\coloneqq \{\nc_I(J): J\in \mis(H)\text{ and } T\cap J=\emptyset\}.
\]
(The collection $\cF$ used in the proof is actually a subset of the one above.) Two facts aid us in showing $\pr\left[T\text{ is a transversal for } \cF\right]$ is small. First, by our choice of $H$, we have that $|\cF|$ is relatively large. Second, the collection of events \[\big\{\{F\cap T=\emptyset\}: F\in \cF \big\}\] is \textit{approximately} pairwise independent, since the typical intersection size of independent sets in $\mis(H)$ is small. The combination of these facts allows us to deduce $\pr\left[T\text{ is a transversal for } \cF\right] = o\left(1/\binom{|V(G)|}{t}\right)$ by applying one of Janson's inequalities.  For the sake of readability, some of the more tedious calculations are relegated to Appendix~\ref{sec-appendix}.

\begin{proof}[Proof of Theorem \ref{thm-janson-construction}]
Put $m=n^{1/3}$, and let $H=H_1+H_2+\dots+H_m$ where $H_i\cong mK_m$ for $i\in [m]$. Let $c$ be an arbitrary constant with $0<c<1$. Put $k=\lceil\frac{3c}{1-c}\rceil$ and $\epsilon=\frac{1-c}{3}$ so the hypothesis of Lemma~\ref{lemma-rand-hollowifier} holds. Let $I$ be an independent set on $kn$ vertices. Let $G= H+_p I$, where $p\coloneqq \frac{\log(1/\gamma)}{m}$ and $\gamma \coloneqq \frac{k+1}{k}\frac{1}{1-\epsilon}c<1$. Note that $|\mis(H)| = m^{m+1} = \exp\left(o(n) \right)$, so Lemma~\ref{lemma-rand-hollowifier} implies that $G$ is $c$-hollow whp as $n\to\infty$. For any $J\in \mis(H)$, we have 
\[
\E\left [ |\nc_I(J)| \right ] = kn(1-p)^m = kn\left(1-\frac{\log(1/\gamma)}{m}\right)^m\nearrow \gamma kn.
\]
Fix a set $T\subseteq V(G)$ of size $t= \frac{m}{10\log n}$. Define\[
\cF_T= \{\nc_I(J): \exists i\in [m],\, J\in \mis(H_i)\text{ and } T\cap H_i=\emptyset\}.
\]
As $T$ can intersect at most a $\frac{1}{10\log n}$ fraction of the $H_i$ subgraphs, we have 
\[
|\cF_T|\geq \left(1-\frac{1}{10\log n}\right) |\mis(H)|.
\]
We allow for the collection $\cF_T$ to contain duplicate elements (formally, $\cF_T$ is a multiset), so the quantity $|\cF_T|$ above is counting possible duplicates. If \(T\) is a transversal for $\mis(G)$, then $T$ must be a transversal for $\cF_T$.

Our goal is to provide an upper bound on the probability that $T$ is a transversal for $\cF_T$ using Janson's inequality. Write $\{J_i\}_{i \in \mathcal{I}}$ for an arbitrary indexing of the collection $\{J: \nc_I(J)\in \cF_T\}$ and let $I_i= \nc_I(J_i)$, so that, in this notation, $\mathcal{F}_T= \{I_i\}_{i \in \mathcal{I}}$. Define the set $T^{+}= (T\cap I)\cup A$, where $A\subseteq I$ is arbitrarily (but deterministically) chosen so that $|T^{+}|=t$. Define the indicator random variable $X_i= \mathbbm{1}\{T^{+}\cap I_i=\emptyset\}$. Let $p_i\coloneqq \E[X_i]= \pr\{T^{+}\cap I_i=\emptyset\}$. We desire a strong upper bound for 
\[
\pr \left[ T\text{ is a transversal for }\cF_T \right] \leq \pr\left[ T^{+}\text{ is a transversal for }\cF_T \right] =\pr\left[\sum_i X_i = 0 \right].
\] 
If the $X_i$'s were mutually independent, we would obtain the exponentially small upper bound $\prod_{i}(1-p_i)$. Since we \textit{almost} have independence, we will apply the following (special case of a) theorem of Janson.
\begin{theorem}[special case of Theorem 3 in \cite{janson1998new}]\label{thm-janson} Let $\Gamma$ be a graph with vertex set $\mathcal{I}$ with adjacencies defined by $i\sim j$ whenever $J_i$ and $J_j$ are both contained in a common $H_\ell$ subgraph.  Then
    \[
    \pr\left[\sum_iX_i=0 \right]\leq \exp\left(-\min \left (\frac{\mu^2}{8\Delta}, \frac{\mu}{6\delta}, \frac{\mu}{2} \right )\right),
    \]
\end{theorem}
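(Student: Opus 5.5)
The plan is to obtain the displayed bound by placing the indicators $X_i$ in the standard setting of Janson's inequality, or of Suen's inequality as sharpened in \cite{janson1998new}. The quantities in the statement are understood as usual: $\mu=\sum_{i}p_i$, $\Delta=\sum_{i\sim j}\E[X_iX_j]$ (over ordered or unordered adjacent pairs; this only changes constants), and $\delta=\max_i\sum_{j\sim i}p_j$.

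The first step is to rewrite each event $\{X_i=1\}$ as a monotone event in independent coordinates. The only randomness in $G$ is the family of edges between $V(H)$ and $I$, each present independently with probability $p$. Now $X_i=1$ exactly when no vertex of $J_i$ is adjacent to any vertex of $T^{+}\subseteq I$, that is, when every pair in
\[
A_i\coloneqq J_i\times T^{+}
\]
is a non-edge. Let $R$ be the random set of non-edges between $V(H)$ and $I$. Each pair lies in $R$ independently with probability $1-p$, and $\{X_i=1\}=\{A_i\subseteq R\}$. This is precisely Janson's framework of events ``a fixed set is contained in a random subset.''

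The second step is to check that $\Gamma$ is a legitimate dependency graph. Every $J\in\mis(H)$ lies inside a single $H_\ell$, since $H$ is a join and vertices in different $H_\ell$'s are adjacent. If $J_i\subseteq H_\ell$ and $J_j\subseteq H_{\ell'}$ with $\ell\neq\ell'$, then $J_i\cap J_j=\emptyset$, so $A_i\cap A_j=\emptyset$. Moreover, for any index set $S$ of vertices non-adjacent to $i$ in $\Gamma$, the sets $\{A_j:j\in S\}$ involve only pairs with first coordinate outside $H_\ell$. Hence $X_i$ is independent of $\{X_j:j\in S\}$. So $A_i\cap A_j\neq\emptyset$ implies $i\sim j$, which means $\Gamma$ contains Janson's intersection graph and is also a genuine dependency graph in the sense of Suen.

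The final step is to invoke the inequality itself, and I see two routes. The first is to quote Theorem~3 of \cite{janson1998new}, which is Suen-type and needs only a dependency graph; it yields exactly the three-term minimum, with the term $\mu/(6\delta)$ accounting for the lack of positive correlation. The second is to note that the events $\{A_i\subseteq R\}$ are increasing in $R$. Then the classical Janson inequality applies, either as $\pr[\sum X_i=0]\le e^{-\mu+\Delta/2}$ or in the extended form $e^{-\mu^2/(2\Delta)}$ when $\Delta\ge\mu$. Both bounds are at most $\exp(-\min(\mu^2/(8\Delta),\mu/2))$, because enlarging the neighbourhood relation from ``$A_i\cap A_j\ne\emptyset$'' to $\Gamma$ only enlarges $\Delta$, and the classical bound is monotone in $\Delta$. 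This bound is at least as strong as the stated one.

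The main obstacle is purely bookkeeping: making sure the normalization of $\Delta$ (ordered versus unordered pairs) and the treatment of the multiset $\cF_T$ with repeated $J_i$ match the hypotheses of the cited theorem. If two indices give the same $J$, they are adjacent in $\Gamma$, so duplicates are harmless. Any discrepancy in constants is absorbed by the slack between $2$ and $8$ in the exponent.
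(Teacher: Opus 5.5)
Your first route is the paper's: it quotes Theorem~3 of \cite{janson1998new} and only verifies the dependency-graph condition. Your verification is sound. It uses only two facts: $X_i$ is a function of the edge indicators on $J_i\times T^{+}$, and indices that are non-adjacent in $\Gamma$ lie in different $H_\ell$, hence on disjoint coordinate sets. The same argument also gives independence between any two families of indicators with no $\Gamma$-edges between them, which is the form of the condition Suen-type inequalities use.

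However, your description of the event is wrong, and this breaks your second route as written. Since $X_i=\mathbbm{1}\{T^{+}\cap \nc_I(J_i)=\emptyset\}$, we have $X_i=1$ iff every $v\in T^{+}$ has \emph{at least one} neighbour in $J_i$:
\[
\{X_i=1\}=\bigwedge_{v\in T^{+}}\ \bigvee_{a\in J_i}\{va\in E(G)\}.
\]
This is why $p_i=(1-(1-p)^m)^t$. The event $\{J_i\times T^{+}\subseteq R\}$ you wrote is instead $\{T^{+}\subseteq \nc_I(J_i)\}$, which has probability $(1-p)^{mt}$.

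For the Suen-type route the slip is harmless, since only the set of coordinates the event depends on matters. The classical Janson inequality in your second route, however, is stated for principal up-sets $\{A_i\subseteq R\}$. The true event is not of that form; it is only increasing in the random edge set (decreasing in $R$).

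To rescue that route you need Janson's inequality for general up-sets (Riordan and Warnke). Concretely, in the Boppana--Spencer proof set $B_j=\{X_j=1\}$ and $D_0=\bigwedge_{j<i,\,j\not\sim i}\{X_j=0\}$. Replace the conditional FKG step by two facts: $\pr[B_i\wedge D_0]=\pr[B_i]\pr[D_0]$, from disjoint coordinates, and $\pr[B_i\wedge B_j\wedge D_0]\le \pr[B_i\wedge B_j]\pr[D_0]$, from Harris. Together they give $\pr[B_j\mid B_i\wedge D_0]\le \pr[B_j\mid B_i]$, which is all the argument needs. Either add this or drop the second route and rely on the Suen-type theorem, as the paper does.
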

where 
\begin{itemize}
    \item $\mu\coloneqq \sum_i p_i = \E\left[\sum_i X_i\right]$,
    %\item $i\sim j$ if $ij\in E(\Gamma)$,
    \item $\delta \coloneqq \max_i\left(\sum_{j\sim i}p_j\right)$,
    \item $\Delta\coloneqq \sum_{\{i,j\}: i\sim j}\E[X_iX_j] = \frac{1}{2}\sum_i \sum_{j\sim i}\E[X_iX_j]$.
\end{itemize}
One can easily check that $X_i$ is mutually independent of the collection $\{X_j: j\not\sim i\}$, a requirement of the choice of graph in Janson's inequality. Our notation follows \cite{janson1998new} with the exception that we use $X_i$ (instead of $I_i$) for our indicator random variables. 
\begin{claim*}
    $\frac{\mu^2}{\Delta}$, $\frac{\mu}{\delta}$, and $\mu$ are all at least $(1-o(1))m$.
\end{claim*}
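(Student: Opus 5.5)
Throughout, let $q\coloneqq(1-p)^m$ be the probability that a fixed vertex of $I$ has no neighbour in a fixed $J\in\mis(H)$; then $q\nearrow\gamma<1$. Because $H$ is a join of the $H_\ell$, every $J_i$ is an MIS of a single $H_\ell$: it picks one vertex from each of the $m$ cliques of $H_\ell$, so $|J_i|=m$. The plan is to compute $p_i$, $\mu$, $\delta$ and $\Delta$ almost exactly, using that edges between $H$ and $I$ are independent.

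\textbf{Step 1: $p_i$ and $\mu$.} We have $X_i=1$ exactly when every $v\in T^{+}$ has at least one neighbour in $J_i$. These events are independent over $v\in T^{+}$, so $p_i=(1-q)^t\eqqcolon p_0$ for every $i$. Hence $\mu=|\cF_T|\,p_0$, with $|\cF_T|\ge(1-\frac{1}{10\log n})m^{m+1}$. Since $t=m/(10\log n)$ and $1-q$ is bounded away from $0$, we get $p_0\ge e^{-O(m/\log n)}$. Therefore $\mu\ge m^{m+1}e^{-O(m/\log n)}$, which is far larger than $m$.

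\textbf{Step 2: $\delta$.} The neighbours $j\sim i$ are the indices with $J_j$ in the same $H_\ell$ as $J_i$, and there are at most $m^m$ of them. So $\delta\le m^m p_0$, and
\[
\frac{\mu}{\delta}\ \ge\ \frac{|\cF_T|}{m^m}\ \ge\ \Big(1-\frac{1}{10\log n}\Big)m.
\]

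\textbf{Step 3: $\Delta$, the main computation.} Take $j\sim i$ and let $s$ be the number of cliques of $H_\ell$ on which $J_i$ and $J_j$ choose the same vertex, so $|J_i\cup J_j|=2m-s$. A vertex $v\in T^{+}$ must meet both $J_i$ and $J_j$. By inclusion--exclusion this has probability $1-2q+(1-p)^{2m-s}$, so
\[
\E[X_iX_j]=\big(1-2q+q^2(1-p)^{-s}\big)^t .
\]
Comparing with $p_0^2=(1-2q+q^2)^t$ gives
\[
\frac{\E[X_iX_j]}{p_0^2}\le\exp\!\Big(C\,t\big((1-p)^{-s}-1\big)\Big)
\]
for a constant $C=C(\gamma)$.

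For a fixed $i$, the number of $j$ in the same $H_\ell$ with agreement $s$ is $\binom{m}{s}(m-1)^{m-s}$. Hence
\[
\sum_{j\sim i}\E[X_iX_j]\le m^m p_0^2\;\E_{S}\Big[e^{Ct((1-p)^{-S}-1)}\Big],\qquad S\sim\mathrm{Bin}(m,1/m).
\]
Then $\Delta\le\frac12|\cF_T|\,m^m p_0^2\,\E_S[\cdot]$, and so
\[
\frac{\mu^2}{\Delta}\ \ge\ \frac{2|\cF_T|}{m^m\,\E_S[\cdot]}\ \ge\ \frac{2(1-o(1))m}{\E_S[\cdot]}.
\]
It therefore suffices to show $\E_S[\cdot]\le 2+o(1)$; I expect in fact $\E_S[\cdot]=1+o(1)$.

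\textbf{Main obstacle: bounding $\E_S[\cdot]$.} Since $p=\Theta(1/m)$ and $s\le m$, we have $(1-p)^{-s}-1=O(s/m)$. So the exponent is $O(ts/m)=O(s/\log n)$, and the integrand is at most $e^{O(S/\log n)}$. For $S\le\log n/\log\log n$ this is $1+o(1)$. For larger $S$, use $\pr[S=s]\le 1/s!$: the bad contribution is at most
\[
\sum_{s>\log n/\log\log n}\frac{e^{O(s/\log n)}}{s!}=o(1).
\]
This gives $\E_S[\cdot]=1+o(1)$, hence $\mu^2/\Delta\ge(1-o(1))m$, completing the claim.

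The two points that need care are, first, the uniform constant $C$ in the ratio estimate, which holds because $q\le\gamma<1$; and second, including or excluding the diagonal term $j=i$, which only affects $\Delta$ by a $1+o(1)$ factor either way.
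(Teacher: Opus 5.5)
Your proposal is correct and follows essentially the same route as the paper: the same bounds for $\mu$ and $\delta$, the same decomposition of $\Delta$ by the overlap $r=|J_i\cap J_j|$ with $N_r=\binom{m}{r}(m-1)^{m-r}$, and the same linearization $(1-p)^{-r}-1=O(r/m)$. The differences are only technical conveniences: you compare $\E[X_iX_j]$ with the exact $p_0^2=(1-q)^{2t}$, which avoids the paper's replacement of $(1-p)^m$ by $\gamma$ (Appendix Fact~\ref{fact2}), and you bound the $\mathrm{Bin}(m,1/m)$ average by a tail estimate rather than the binomial theorem, while keeping the factor $\tfrac12$ that the paper discards.
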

Assuming the claim holds, Theorem~\ref{thm-janson} implies 
$$
\pr\left[T^{+}\text{ is a transversal for }\cF_T \right] \leq \exp\left(-(1-o(1))m/8\right).
$$
Note that 
\begin{align*}
    \binom{(k+1)n}{t}\leq \left((k+1)n\right)^t= \exp\left(t\log((k+1)n)\right)
&= \exp\left(\frac{m}{10 \log n} (\log (k+1) + \log n) \right) \\
&\leq \exp\left(m/9 \right).
\end{align*}

Therefore, the probability that a randomly sampled graph $G$ admits a transversal of size $t$ is at most $\exp\left((\frac{1}{9} -\frac{1}{8} +o(1))m\right)\to 0$ by applying the union bound over all choices of $T\subseteq V(G)$ with $|T|=t$. Since $|V(G)|=(k+1)n$ and $k$ is a fixed constant,  $t=\frac{n^{1/3}}{10\log n}=\Omega\left(\frac{|V(G)|^{1/3}}{\log |V(G)|}\right)$. Hence, the graph $G$ satisfies the requirement of Theorem~\ref{thm-janson-construction} whp. It now suffices to verify the claim, which we break into 3 parts. 

Let $v\in T^{+}$ and $i \in \mathcal{I}$. Then 
\[
\pr \{v\in I_i\} = \pr \{v\in \nc_I(J_i)\} = (1-p)^m\nearrow  \gamma.
\]
The collection of events $\big\{ \{v\in I_i\}: v\in T^{+}\big\}$ are mutually independent. Hence, for all $i\in \mathcal{I}$, 
\begin{align*}
p_i=\pr\left[\bigwedge_{v\in T^{+}} \{v\not\in I_i\}\right]=\prod_{v\in T^{+}} (1-(1-p)^m)
= \left(1-(1-p)^m\right)^t.
\end{align*}

\begin{claim}
    $\mu$ is at least $(1-o(1))m$. 
\end{claim}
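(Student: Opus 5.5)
Since every $p_i$ equals the same quantity $\left(1-(1-p)^m\right)^t$, the plan is to write $\mu = |\mathcal{F}_T|\cdot\left(1-(1-p)^m\right)^t$ and bound the two factors separately, then check that the product is at least $(1-o(1))m$. In fact I expect $\mu$ to be enormous, of order $\exp(\Theta(m\log m))$, so the bound $(1-o(1))m$ has a lot of slack. Only crude estimates should be needed.

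\emph{First factor.} We have $|\mathcal{F}_T|\geq \left(1-\frac{1}{10\log n}\right)|\mis(H)|$. I will recompute $|\mis(H)|$ directly. An MIS of the join $H=H_1+\dots+H_m$ lies entirely inside a single $H_i$, and an MIS of $H_i\cong mK_m$ picks one vertex from each of its $m$ cliques. So each $H_i$ contributes $m^m$ sets.

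However, the definition of $\mathcal{F}_T$ only counts $J\in\mis(H_i)$ with $T\cap H_i=\emptyset$. Since $|T|=t=o(m)$, at least $m-t=(1-o(1))m$ indices $i$ qualify. This gives
\[
|\mathcal{F}_T|\geq (m-t)m^m=(1-o(1))\,m^{m+1}.
\]

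\emph{Second factor.} Since $(1-p)^m\leq\gamma'<1$ for a constant $\gamma'$ (indeed $(1-p)^m\nearrow\gamma<1$, so $(1-p)^m\le\gamma$ for all $n$), we get
\[
\left(1-(1-p)^m\right)^t\geq (1-\gamma)^t=\exp\bigl(-t\log\tfrac{1}{1-\gamma}\bigr)=\exp\bigl(-O(m/\log n)\bigr).
\]

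\emph{Combining.} This yields
\[
\mu\geq (1-o(1))\,m^{m+1}\exp\bigl(-O(m/\log n)\bigr)=\exp\bigl((m+1)\log m-O(m/\log n)\bigr),
\]
which is far larger than $m$. In particular $\mu\geq(1-o(1))m$.

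\emph{Main obstacle.} The only delicate point is bookkeeping: verifying that the multiset $\mathcal{F}_T$ really has on the order of $m^{m+1}$ elements, and that the monotonicity $(1-p)^m\le\gamma$ gives a uniform constant bound independent of $n$. No concentration argument is needed. The genuinely harder parts of the Claim are the ratios $\mu^2/\Delta$ and $\mu/\delta$, not $\mu$ itself.
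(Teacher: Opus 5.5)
Your proposal is correct and is essentially the paper's argument. You write $\mu=|\mathcal{F}_T|\,(1-(1-p)^m)^t\ge \bigl(1-\tfrac{1}{10\log n}\bigr)\,m^{m+1}(1-\gamma)^t$, where your count $(m-t)m^m$ is exactly this lower bound on $|\mathcal{F}_T|$, and then observe that $t=o(m)$ makes this far larger than $m$.
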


Since $(1-p)^m\leq \gamma$, we have  $p_i\geq 
(1-\gamma)^t$. 
We then deduce \[\mu= \sum_ip_i \geq \left(1-\frac{1}{10\log n}\right) |\mis(H)|(1-\gamma)^{t} = \left(1-o(1)\right)m^{m+1}(1-\gamma)^{t}.\] 
As $t=o(m)$, this is more than enough to prove the claim. \hfill $\blacksquare$

\begin{claim}
    $\frac{\mu}{\delta}$ is at least $(1-o(1))m$.
\end{claim}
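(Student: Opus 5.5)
Since every $p_i$ equals the same quantity, the ratio $\mu/\delta$ should collapse to a purely combinatorial ratio: the size of the index set $\mathcal{I}$ divided by the maximum degree of the dependency graph $\Gamma$. That is the plan.

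First I note that $p_i$ does not depend on $i$. We computed above that $p_i=\left(1-(1-p)^m\right)^t=:q$ for every $i\in\mathcal{I}$. Hence
\[
\mu=|\mathcal{I}|\,q \qquad\text{and}\qquad \delta=\max_i \sum_{j\sim i} q = q\cdot \Delta(\Gamma),
\]
where $\Delta(\Gamma)$ is the maximum degree of $\Gamma$. So $\mu/\delta = |\mathcal{I}|/\Delta(\Gamma)$, and no probabilistic estimate is needed.

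Next I describe $\mis(H)$. Since $H=H_1+\dots+H_m$ is a join, any independent set of $H$ lies inside a single $H_\ell$. Hence the maximal independent sets of $H$ are exactly the sets $J\in\mis(H_\ell)$ for $\ell\in[m]$. Each $H_\ell\cong mK_m$ has exactly $m^m$ maximal independent sets, one vertex chosen from each clique. This gives $|\mis(H)|=m^{m+1}$, consistent with the count used earlier, and every $J\in\mis(H)$ lies in exactly one $H_\ell$.

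Then I bound both sides of the ratio. Each index $i\in\mathcal{I}$ corresponds to one $J_i$, and $i\sim j$ exactly when $J_j$ lies in the same $H_\ell$ as $J_i$. So the degree of $i$ in $\Gamma$ is at most $|\mis(H_\ell)|-1<m^m$, i.e.\ $\Delta(\Gamma)\le m^m$. On the other side, the index set consists of all $J\in\mis(H_\ell)$ over those $\ell$ with $T\cap H_\ell=\emptyset$. At most $t=\frac{m}{10\log n}$ of the $H_\ell$ meet $T$, so
\[
|\mathcal{I}|\ge \Bigl(m-\tfrac{m}{10\log n}\Bigr)m^m=\Bigl(1-\tfrac{1}{10\log n}\Bigr)m^{m+1}.
\]
Combining the two bounds gives $\mu/\delta\ge(1-\frac{1}{10\log n})m=(1-o(1))m$.

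I do not expect a real obstacle here. The one point to be careful about is the multiset convention for $\cF_T$. The index set $\mathcal{I}$ runs over the sets $J$, not over the distinct sets $\nc_I(J)$, so duplicate $\nc_I(J)$'s neither shrink $|\mathcal{I}|$ nor inflate degrees in $\Gamma$. Adjacency in $\Gamma$ is defined through the $J$'s, so the counting above is exactly correct.
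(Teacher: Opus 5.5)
Your proposal is correct and follows the paper's proof: because every $p_i$ equals the same value $\bigl(1-(1-p)^m\bigr)^t$, the ratio $\mu/\delta$ reduces to $|\mathcal{I}|$ divided by the maximum degree of $\Gamma$. That degree is at most $m^m$, and $|\mathcal{I}|\ge\bigl(1-\frac{1}{10\log n}\bigr)m^{m+1}$, exactly as the paper argues.
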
 

Given $J_i\in \mis(H)$, there are at most $m^m$ other $J_j$ with $j\sim i$. Hence $\delta= \max_i\left(\sum_{j\sim i}p_j\right)\leq  m^m p_j$. We have \begin{align*}
    \frac{\mu}{\delta}&\geq \frac{\left(1-\frac{1}{10\log n}\right)m^{m+1} p_j}
    {m^m p_j}
    = \left(1-o(1)\right)m.
\end{align*}

\hfill $\blacksquare$

\begin{claim}
    $\frac{\mu^2}{\Delta}$ is at least $(1-o(1))m$.
\end{claim}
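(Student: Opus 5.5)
The plan is to compute $\E[X_iX_j]$ exactly for adjacent $i\sim j$ in terms of the overlap $s=|J_i\cap J_j|$, and then average over the overlap. Since every $p_i$ equals the same value $P\coloneqq(1-q)^t$, where $q\coloneqq(1-p)^m$, both $\mu$ and $\Delta$ reduce to counting problems. The single estimate that matters is that $t/m=\frac{1}{10\log n}\to 0$.

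\textbf{Step 1 (pair probabilities).} Fix $i\sim j$, so $J_i,J_j\in\mis(H_\ell)$ for a common $\ell$. Each of $J_i,J_j$ consists of one vertex from each of the $m$ cliques of $H_\ell$, so $|J_i|=|J_j|=m$ and $|J_i\cup J_j|=2m-s$. For $v\in T^+\subseteq I$ we have $v\in I_i$ exactly when $v$ has no edge to $J_i$. Hence $\pr\{v\in I_i\cap I_j\}=(1-p)^{2m-s}$, and by inclusion--exclusion
\[
\pr\{v\notin I_i,\ v\notin I_j\}=1-2q+q^2(1-p)^{-s}=(1-q)^2\Bigl(1+\tfrac{q^2}{(1-q)^2}\bigl((1-p)^{-s}-1\bigr)\Bigr).
\]
The events for different $v\in T^+$ involve disjoint sets of edges, so they are independent. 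Using $1+x\le e^x$, this gives
\[
\E[X_iX_j]\le P^2\exp\Bigl(C t\bigl((1-p)^{-s}-1\bigr)\Bigr),
\]
where $C=\gamma^2/(1-\gamma)^2$ is a constant (recall $q\le\gamma<1$). Since $p=\log(1/\gamma)/m$ and $s\le m$, we have $(1-p)^{-s}-1\le e^{2ps}-1\le C' s/m$ for a constant $C'$, using convexity on the bounded range $ps\le 2\log(1/\gamma)$. Therefore
\[
\E[X_iX_j]\le P^2 e^{\lambda s},\qquad \lambda\coloneqq CC't/m=O(1/\log n).
\]

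\textbf{Step 2 (counting by overlap).} For fixed $i$, the number of $J_j\in\mis(H_\ell)$ with $|J_i\cap J_j|=s$ is $\binom{m}{s}(m-1)^{m-s}$. Dropping the restriction to $\cF_T$ only enlarges $\Delta$, and duplicates in the multiset are handled the same way. This yields
\[
\Delta\le\tfrac12\,|\mathcal I|\,P^2\sum_{s}\binom{m}{s}(m-1)^{m-s}e^{\lambda s}
=\tfrac12\,|\mathcal I|\,P^2\,(m-1+e^\lambda)^m .
\]
Now $(m-1+e^\lambda)^m=m^m\bigl(1+\tfrac{e^\lambda-1}{m}\bigr)^m\le m^m\exp(e^\lambda-1)=(1+o(1))m^m$, because $\lambda\to0$.

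\textbf{Step 3 (conclusion).} Since $\mu=|\mathcal I|P$ and $|\mathcal I|\ge(1-o(1))m^{m+1}$, we get
\[
\frac{\mu^2}{\Delta}\ge\frac{2|\mathcal I|}{(1+o(1))m^m}\ge(2-o(1))m,
\]
which is more than enough for the claim.

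The main obstacle is Step 1. The naive bound $(1-p)^{-s}\le\gamma^{-1}$ produces a factor $e^{\Theta(t)}$, which would be fatal. The fix is to keep the dependence on $s$ linear, so that the correlation penalty is $e^{O(ts/m)}$. That penalty is then tamed because a typical overlap is $\mathrm{Bin}(m,1/m)$-like with $O(1)$ mean, and $t/m\to0$.
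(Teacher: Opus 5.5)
Your proposal is correct and follows the paper's route: exact pair probabilities via inclusion--exclusion in the overlap $r=|J_i\cap J_j|$, the count $\binom{m}{r}(m-1)^{m-r}$, a linear bound $(1-p)^{-r}\le 1+Cr/m$ (the paper's Fact~\ref{fact3}), then $1+x\le e^x$ and the binomial theorem. Your one streamlining is to factor out $(1-q)^2$ with the exact $q=(1-p)^m$, which controls the ratio $\E[X_iX_j]/(p_ip_j)$ directly and avoids the paper's replacement of $(1-p)^m$ by $\gamma$ (Facts~\ref{fact1}--\ref{fact2}); keeping the factor $\tfrac12$ in $\Delta$ is why you get $(2-o(1))m$ rather than $(1-o(1))m$.
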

For $i,j \in \mathcal{I}$, define $p_{ij}\coloneqq \pr\{v\in I_i\cup I_j\}$ (the choice of $v\in T^+$ does not affect this quantity). Since, for fixed $i,j \in \mathcal{I}$, the collection of events $\big\{\{v\in I_i\cup I_j\}: v\in T^{+} \big\}$ are mutually independent, we have \begin{align*}
    \E[X_iX_j] &= \pr\left \{ T^{+}\cap (I_i \cup I_j) = \emptyset \right \}
    = \pr \left[ \bigwedge_{v\in T^{+}}\{v\notin I_i\cup I_j\}\right]
    = \prod_{v\in T^{+}}(1-p_{ij})
    = (1-p_{ij})^t.
\end{align*}
To estimate $p_{ij}$, put $r=|J_i\cap J_j|$ and write 
\begin{align*}
    p_{ij} &= \pr \left\{ v\in I_i\cup I_j\right\}
    = \pr \left[ \{v\in I_i\}\vee \{v\in I_j\} \right]\\
    &= \pr \{v\in I_i\} + \pr  \{v\in I_j\}-\pr\left[\{v\in I_i\}\wedge \{v\in I_j\}\right]\\
    &=2(1-p)^m - (1-p)^{|J_i\cup J_j|} \\
    &= 2(1-p)^m - (1-p)^{2m-r}.
\end{align*}
Then $\E[X_iX_j]=\left(1-2(1-p)^m+(1-p)^{2m-r}\right)^t$. 
Fix an $\ell \in [m]$ and $J'\in \mis(H_\ell)$. For $r\in \{0,1,\dots,m\}$, define the constants \[
N_r=|\{J\in \mis(H_\ell): |J\cap J'|=r\}| = \binom{m}{r}(m-1)^{m-r}.
\]
Putting everything together, we have \begin{align*}
\Delta= \frac{1}{2}\sum_i\sum_{j\sim i}\E[X_iX_j] &\leq \frac{1}{2} |\mis(H)| \max_i \sum_{j\sim i}\E[X_iX_j]\\
&\leq  |\mis(H)| \sum_{r=0}^m N_r\left(1- 2(1-p)^m + (1-p)^{2m}(1-p)^{-r}\right)^t\\
&\leq (1+o(1)) |\mis(H)| \sum_{r=0}^m N_r\left(1- 2\gamma + \gamma^2(1-p)^{-r}\right)^t\quad\text{by Appendix Fact~\ref{fact2}}.
\end{align*}

Estimating $\frac{\mu^2}{\Delta}$ gives \begin{align*}
    \frac{\mu^2}{\Delta} &\geq \frac{\left(1-\frac{1}{10\log n}\right)^2|\mis(H)|^2(1-\gamma)^{2t}}{(1+o(1))|\mis(H)|  \sum_{r=0}^m N_r\left(1- 2\gamma + \gamma^2(1-p)^{-r}\right)^t}\\
    &=(1-o(1))\frac{ m^{-m} |\mis(H)|(1-2\gamma+\gamma^2)^t}{\sum_{r=0}^{m} m^{-m} N_r (1-2\gamma+\gamma^2(1-p)^{-r})^t}\\
    &\geq (1-o(1))\frac{ m(1-2\gamma+\gamma^2)^t}{\sum_{r=0}^{m}\frac{1}{e}\binom{m}{r}(m-1)^{-r} (1-2\gamma+\gamma^2(1-p)^{-r})^t},
\end{align*}
since $m^{-m}|\mis(H)|=m$ and $m^{-m}N_r=\frac{\binom{m}{r} (m-1)^{m-r}}{m^m} \leq \frac{1}{e}\binom{m}{r}(m-1)^{-r}$. Put $A=(1-2\gamma+\gamma^2)$. Since $\frac{1-2\gamma+\gamma^2(1-p)^{-r}}{A}= 1+ \frac{\gamma^2}{A}((1-p)^{-r}-1)$, after dividing the numerator and denominator by $A^t$, we obtain \[
\frac{\mu^2}{\Delta}\geq \frac{(1-o(1)) m}{\sum_{r=0}^{m}\frac{1}{e}\binom{m}{r}(m-1)^{-r} \left(1+ \frac{\gamma^2}{A}((1-p)^{-r}-1)\right)^t}.
\]
For $m$ sufficiently large, $(1-p)^{-r}\leq C\frac{r}{m}+1$ for some fixed constant $C=C(\gamma)$ (see Appendix Fact~\ref{fact3}). Hence, the denominator is at most \begin{align*}
\frac{1}{e}\sum_{r=0}^{m} \binom{m}{r}(m-1)^{-r} \left( 1+\frac{\gamma^2}{A}\frac{Cr}{m}\right)^{m/10\log n} 
&\leq \frac{1}{e}\sum_{r=0}^{m} \binom{m}{r}(m-1)^{-r} \exp\left( \frac{\gamma^2}{A}\frac{Cr}{m}\frac{m}{10\log n}\right).
\end{align*}
Applying the binomial theorem, this quantity is at most 
\[\frac{1}{e}\left(\frac{1}{m-1}\exp\left( \frac{\gamma^2C}{10A\log n}\right) + 1\right)^m 
\leq \frac{1}{e}\exp\left( \frac{m}{m-1} \exp\left(\frac{\gamma^2C}{10A\log n}\right)\right),
\]
which tends to $1$ as $n\to \infty$. Hence, $\frac{\mu^2}{\Delta}\geq (1-o(1)) m$. 
\hfill$\blacksquare$

 This completes the proof of the claim that $\min\left( \frac{\mu^2}{\Delta}, \frac{\mu}{\delta}, \mu\right) \geq (1-o(1))m$ and also completes the proof of the theorem.  

\end{proof}

\section{Cographs}\label{sec-cographs}
	There are many equivalent definitions for cographs (e.g., cographs are the class of induced $P_4$-free graphs). For our purposes, the following definition will be easiest to use. 
	\begin{definition}
		A \textit{cograph} is a graph obtained by the following recursive construction.
		\begin{enumerate}
			\item $K_1$ is a cograph.
			\item The disjoint union $G\cup H$ of two cographs is a cograph.
			\item The join $G+H$ of two cographs is a cograph. 
		\end{enumerate}
		In other words, the class of cographs is the smallest graph class containing $K_1$, which is closed under disjoint unions and joins. 
	\end{definition}

Recall that $i(G)$ denotes the size of a smallest maximal independent set of $G$ and $\beta(G)\coloneqq  \frac{\tau(G)i(G)}{n}$. The next proposition describes how these parameters interact under disjoint union and join. We will use the proposition to show that cographs satisfy $\beta(G)\leq 1$, or equivalently, $\tau(G)\leq \frac{n}{i(G)}$. 
	\begin{proposition}\label{prop-params-join-union}
		For $i\in \{1,2\}$, let $G_i$ be a graph on $n_i$ vertices. We shorten $\tau(G_i)$ to $\tau_i$ and use the same shorthand for the other graph parameters. The following table summarizes how $\tau$, $i$, and $n$ behave for $G=G_1\cup G_2$ and $G=G_1+G_2$. 
		\begin{center}
			\begin{tabular}{||c|| c c c||} 
				\hline
				\phantom{x} & $\tau(G)$ & $i(G)$ & $n=|V(G)|$  \\ 
				\hline\hline
				\hline
				$G=G_1\cup G_2$& $\min(\tau_1,\tau_2)$ & $i_1+i_2$ & $n_1+n_2$ \\
				\hline
				$G=G_1+G_2$ & $\tau_1+\tau_2$ & $\min(i_1, i_2)$ & $n_1+n_2$\\
				\hline
			\end{tabular}
		\end{center}
Moreover, for both $G=G_1\cup G_2$ and $G=G_1+G_2$, the following inequality holds: \begin{equation}\label{betainterpolate}
			\beta(G)\coloneqq\frac{\tau(G) i(G)}{n} \leq \frac{n_1}{n_1+n_2}\beta_1 + \frac{n_2}{n_1+n_2}\beta_2.
		\end{equation}
		Equality in \eqref{betainterpolate} holds for $G=G_1\cup G_2$ when $\tau_1=\tau_2$ and holds for $G=G_1+G_2$ when $i_1=i_2$. 
	\end{proposition}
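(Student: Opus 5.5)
The plan is to first pin down the structure of $\mis(G)$ in each case, then read off the three table entries, and finally derive \eqref{betainterpolate} by elementary manipulation.

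\emph{Disjoint union.} For $G=G_1\cup G_2$, a set $I$ is a MIS of $G$ if and only if $I=I_1\cup I_2$ with $I_j\in\mis(G_j)$. So $\mis(G)\cong \mis(G_1)\times\mis(G_2)$. This gives $i(G)=i_1+i_2$ at once.

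For $\tau$, a minimum transversal of $\mis(G_1)$ (or of $\mis(G_2)$) is a transversal of $\mis(G)$, so $\tau(G)\le\min(\tau_1,\tau_2)$. Conversely, suppose $T$ is a transversal of $\mis(G)$ with $|T|<\min(\tau_1,\tau_2)$. Then $T\cap V(G_1)$ misses some $I_1\in\mis(G_1)$ and $T\cap V(G_2)$ misses some $I_2\in\mis(G_2)$, so $T$ misses $I_1\cup I_2\in\mis(G)$, a contradiction.

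\emph{Join.} For $G=G_1+G_2$, every independent set of $G$ lies entirely inside $V(G_1)$ or entirely inside $V(G_2)$. An independent set inside $V(G_1)$ is maximal in $G$ exactly when it is maximal in $G_1$, since no vertex of $G_2$ can ever be added. Hence $\mis(G)=\mis(G_1)\sqcup\mis(G_2)$, which gives $i(G)=\min(i_1,i_2)$.

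For $\tau$, a union of minimum transversals of the two families is a transversal, so $\tau(G)\le\tau_1+\tau_2$. Conversely, if $T$ is any transversal of $\mis(G)$, then $T\cap V(G_j)$ must be a transversal of $\mis(G_j)$ for each $j$, so $|T|\ge\tau_1+\tau_2$. In both cases $n=n_1+n_2$ is trivial.

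\emph{Inequality \eqref{betainterpolate}.} Multiply through by $n=n_1+n_2$. The inequality becomes $\tau(G)\,i(G)\le \tau_1i_1+\tau_2i_2$, because $n_j\beta_j=\tau_ji_j$.

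For the union, we need $\min(\tau_1,\tau_2)(i_1+i_2)\le\tau_1i_1+\tau_2i_2$. This holds termwise since $\min(\tau_1,\tau_2)\le\tau_j$, with equality when $\tau_1=\tau_2$ (the $i_j$ being positive).

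For the join, we need $(\tau_1+\tau_2)\min(i_1,i_2)\le\tau_1i_1+\tau_2i_2$. This also holds termwise, with equality when $i_1=i_2$.

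\emph{Main obstacle.} The only step needing any care is the lower bound $\tau(G)\ge\min(\tau_1,\tau_2)$ for the disjoint union. It requires that a MIS of $G$ can be assembled from independently chosen MIS's of the two parts, and the product structure above supplies exactly that. Everything else is bookkeeping.
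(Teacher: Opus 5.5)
Your proof is correct and follows the paper's argument. Like the paper, you identify $\mis(G_1\cup G_2)$ with pairs $(I_1,I_2)$ and $\mis(G_1+G_2)$ with $\mis(G_1)\sqcup\mis(G_2)$, read off the table, and then check \eqref{betainterpolate} termwise after multiplying through by $n_1+n_2$; the main addition is that you write out the two-sided bounds on $\tau$ that the paper leaves as ``follows readily.''
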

Inequality~\eqref{betainterpolate} can be interpreted as saying that $\beta(G)$ is upper bounded by a strict convex linear combination of $\beta_1$ and $\beta_2$. 
	
\begin{proof}
    Observe that the maximal independent sets of $G_1\cup G_2$ are all formed by taking the union of an MIS from $G_1$ and an MIS from $G_2$. The maximal independent sets of $G_1+G_2$ must lie entirely inside one of the two parts, so $\mis(G_1+G_2)=\mis(G_1)\cup \mis(G_2)$. The table of parameters follows readily from these observations. We now show how to derive \eqref{betainterpolate}. For $G=G_1\cup G_2$, we have \begin{align*}
        \frac{\tau(G) i(G)}{n} &= \frac{\min(\tau_1,\tau_2)(i_1+i_2)}{n_1+n_2}\\
        &= \frac{\min(\tau_1,\tau_2)i_1}{n_1+n_2} + \frac{\min(\tau_1,\tau_2)i_2}{n_1+n_2}\\
        &\leq \frac{\tau_1i_1}{n_1+n_2} + \frac{\tau_2i_2}{n_1+n_2}\quad\text{ with equality when $\tau_1=\tau_2$}\\
        &= \frac{n_1}{n_1+n_2}\beta_1 + \frac{n_2}{n_1+n_2}\beta_2.
    \end{align*}
    The proof is analogous for $G_1+G_2$. 
\end{proof}

\restatecograph

	\begin{proof}
	By inequality~\eqref{betainterpolate} in Proposition~\ref{prop-params-join-union}, $\beta(G_1\cup G_2)\leq \max (\beta_1,\beta_2)$ and $\beta(G_1+G_2)\leq \max (\beta_1,\beta_2)$.  Observe that $\beta(K_1)= \frac{\tau(K_1)i(K_1)}{1}=1$. Since all cographs can be recursively built up from $K_1$ by disjoint unions and joins, we deduce $\beta(G)\leq 1$ holds for any cograph $G$. Rearranging $\beta(G)\leq 1$ gives the result. 
	\end{proof}
	\begin{remark}\label{remark-cographlike}
		We can create slightly more general graph classes by building up from graphs other than $K_1$. Let $\mathcal{G}$ be the smallest class of graphs containing $\{G_1,\dots,G_n\}$ which is closed under disjoint union and join. Then for any $c$-hollow graph $G\in \mathcal{G}$, we have $\tau(G)\leq \frac{\max_i\{\beta_i\}}{c}$.
	\end{remark}

\section{Split graphs}\label{sec-splitgraphs}

A graph $G$ is a \textit{split graph} if there exists a partition $K\cup I$ of $V(G)$ such that $K$ induces a clique and $I$ induces an independent set. We establish a sharp upper bound for $\tau(G)$ in $c$-hollow split graphs. 

\restatesplit

\begin{observation}\label{obs-splitgraphtau}
	Let $G=K\cup I$ be a split graph with $K$ and $I$ nonempty. Then $\mis(G)$ consists of all sets $\{v\}\cup \nc_I(v)$ for $v\in K$, and  additionally, $\mis(G)$ contains $I$ if $\nc_K(I)=\emptyset$. Let $T$ be a transversal for $\mis(G)$ with $|T| = \tau(G)$. Assume $G$ has no dominating vertices (otherwise $G$ is not $c$-hollow when $n>1/c$). Each $v\in K$ only belongs to one MIS, so if $T$ contains any $v\in K$, we could replace $v$ with any $u\in \nc_I(v)$ to obtain a new transversal $(T\setminus\{v\})\cup \{u\}$ for $\mis(G)$. By iterating this procedure, we obtain a transversal for $\mis(G)$ of size $\tau(G)$ consisting only of vertices from $I$. Define the set system $\mathcal{F}= \{\nc_I(v): v\in K\}$. Then $\tau(\mathcal{F}) = \tau(G)$. 
\end{observation}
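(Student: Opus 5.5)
The plan is to verify the three assertions of Observation~\ref{obs-splitgraphtau} in order: the description of $\mis(G)$, the replacement argument, and the equality $\tau(\mathcal{F})=\tau(G)$. Each uses only the fact that $K$ is a clique and $I$ is independent.

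\textbf{Structure of $\mis(G)$.} Since $K$ is a clique, any independent set $S$ contains at most one vertex of $K$.

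First suppose $S\cap K=\{v\}$. Every other vertex of $K$ is adjacent to $v$, so the only vertices available to enlarge $S$ are those of $I$ not adjacent to $v$. Any such vertex $u\in\nc_I(v)$ can be added: $u$ is non-adjacent to $v$, and $I$ is independent. Hence maximality forces $S=\{v\}\cup\nc_I(v)$. Conversely, this set is independent, and it cannot be extended, because every other $K$-vertex is adjacent to $v$.

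Next suppose $S\cap K=\emptyset$. Then $S\subseteq I$. Maximality forces $S=I$, since adding more $I$-vertices never creates an edge. Moreover, $I$ is maximal exactly when no $w\in K$ can be added, that is, when every $w\in K$ has a neighbor in $I$. This is precisely the condition $\nc_K(I)=\emptyset$.

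\textbf{Replacement step.} Assume $G$ has no dominating vertex. A vertex $v\in K$ is already adjacent to all of $K\setminus\{v\}$, so it must have a non-neighbor in $I$; thus $\nc_I(v)\neq\emptyset$. By the characterization above, $v$ lies in exactly one MIS, namely $\{v\}\cup\nc_I(v)$. Let $T$ be a transversal with $v\in T$, and pick any $u\in\nc_I(v)$. The set $(T\setminus\{v\})\cup\{u\}$ still meets $\{v\}\cup\nc_I(v)$ (it contains $u$), and it meets every other MIS exactly as $T$ did. Its size is at most $|T|$. Iterating this over the $K$-vertices of a minimum transversal produces a minimum transversal $T'\subseteq I$.

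\textbf{Equality $\tau(\mathcal{F})=\tau(G)$.}

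For $\tau(\mathcal{F})\le\tau(G)$: the set $T'\subseteq I$ meets every $\{v\}\cup\nc_I(v)$. It cannot do so through $v\in K$, so it must meet $\nc_I(v)$. Hence $T'$ is a transversal of $\mathcal{F}$.

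For $\tau(G)\le\tau(\mathcal{F})$: take a minimum transversal $S$ of $\mathcal{F}$. We may assume $S\subseteq I$, since all members of $\mathcal{F}$ lie in $I$. Then $S$ meets each MIS of the form $\{v\}\cup\nc_I(v)$. If $I$ itself is an MIS, we also need $S\cap I\ne\emptyset$. This holds because $K\neq\emptyset$, so $\mathcal{F}$ has at least one member, and that member is nonempty; therefore $S\ne\emptyset$, and $S\subseteq I$ gives $S\cap I=S\neq\emptyset$.

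The only delicate point is the case where $I$ is itself an MIS, which the nonemptiness argument in the last step settles. Everything else is routine bookkeeping.
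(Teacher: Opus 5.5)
Your proof is correct and follows the same route as the paper. You characterize $\mis(G)$ via the clique/independent-set split, use that each $v\in K$ lies only in $\{v\}\cup\nc_I(v)$ to swap it for some $u\in\nc_I(v)$ (nonempty by the no-dominating-vertex assumption), and read off $\tau(\mathcal{F})=\tau(G)$. You also spell out the reverse inequality $\tau(G)\le\tau(\mathcal{F})$, which the paper leaves implicit, including the case where $I$ itself is an MIS (there $K\neq\emptyset$ forces $\mathcal{F}$ to have a nonempty member).
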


The following lemma will be used to establish the upper bound in Theorem~\ref{thm-splitgraph}.  Versions of this lemma have appeared in many contexts in the literature, perhaps most notably, \cite{lovasz75}.  We include the proof here for completeness and because we need a precise constant in the statement.

\begin{lemma} \label{lem-transversal-for-thickF}
Fix a constant $0<c<1$ and let $\cF\subseteq2^{[n]}$ be a collection of subsets of $[n]$ where $|F|\geq cn$ for all $F\in \cF$. Then \[
\tau(\mathcal{F}) \leq \frac{-1}{\log (1-c)}\log|\mathcal{F}|+1.
\]
\end{lemma}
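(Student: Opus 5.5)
We use the standard greedy argument. Build the transversal one element at a time, always choosing the element of $[n]$ that hits the most sets not yet hit; we stop once every set is hit.

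\textbf{Step 1 (one greedy step).} Let $\mathcal{F}_j\subseteq\mathcal{F}$ be the sets not hit after $j$ greedy choices, with $\mathcal{F}_0=\mathcal{F}$. Double counting the incidences gives
\[
\sum_{x\in[n]} |\{F\in\mathcal{F}_j : x\in F\}| = \sum_{F\in\mathcal{F}_j}|F| \ge cn|\mathcal{F}_j|.
\]
So some $x$ lies in at least $c|\mathcal{F}_j|$ of the sets in $\mathcal{F}_j$. Choosing this $x$ leaves $|\mathcal{F}_{j+1}|\le (1-c)|\mathcal{F}_j|$. The hypothesis $|F|\ge cn$ refers to the full ground set $[n]$, and the greedy step never deletes ground elements, so it applies unchanged at every stage. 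Iterating, we get $|\mathcal{F}_j|\le (1-c)^j|\mathcal{F}|$.

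\textbf{Step 2 (stopping time).} The process stops when $\mathcal{F}_j=\emptyset$. Since $|\mathcal{F}_j|$ is an integer, this happens once $(1-c)^j|\mathcal{F}|<1$, that is, once
\[
j> \frac{\log|\mathcal{F}|}{-\log(1-c)}.
\]
Let $j^*=\left\lfloor \frac{\log|\mathcal{F}|}{-\log(1-c)}\right\rfloor+1$. Then $j^*$ is the least integer exceeding that ratio, and $j^*\le \frac{-1}{\log(1-c)}\log|\mathcal{F}|+1$. The $j^*$ chosen elements form a transversal, which gives the bound. The degenerate case $\mathcal{F}=\emptyset$ is trivial.

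\textbf{Main obstacle.} The only delicate point is getting the additive constant exactly $+1$, which Step 2 handles through integrality and strict inequality. One edge case needs a remark: if $|\mathcal{F}|=1$, the bound reads $\tau\le 1$, which holds because the single set is nonempty ($|F|\ge cn>0$). The general argument already covers this, since $j^*=1$ there.
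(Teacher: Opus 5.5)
Your proposal is correct and is essentially the paper's proof. The paper also runs the greedy algorithm, uses an averaging argument on the incidence structure to show each step hits at least a $c$-fraction of the unhit sets, and solves $(1-c)^t|\mathcal{F}|=1$; your choice $j^*=\lfloor t\rfloor+1$ simply makes explicit the ``at most $t+1$'' count that the paper states without detail.
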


\begin{proof}
We build a transversal $T\subseteq [n]$ using the greedy algorithm. Initialize $T=\emptyset$. While $\mathcal{U}= \{F\in \cF: F\cap T=\emptyset\}$ is nonempty, choose $j\in [n]$ to maximize $|\{F\in \mathcal{U}: j\in F\}|$, update $T$ to $T\cup\{j\}$. In other words, we iteratively add to $T$ the vertex $j\in [n]$ which hits the most unhit edges of $\mathcal{F}$. The key observation is that, at each step, we add some $j\in [n]$ which hits at least a $c$-proportion of the unhit edges of $\cF$. This follows from a simple averaging argument on the incidence bipartite graph for $\mathcal{F}$. Therefore, the greedy algorithm builds a transversal of size at most $t+1$, where $t$ satisfies $(1-c)^t|\mathcal{F}|=1$. Solving for $t$ gives $t = -\log |\mathcal{F}|/\log(1-c)$, which establishes the lemma.
\end{proof}

\begin{proposition}
Suppose $0 < c < 1$ and $n$ is sufficiently large.  Let $G$ be a $c$-hollow split graph on $n$ vertices. Then $\tau(G)\leq \frac{-1}{\log (1-c)}\log n+1$.
\end{proposition}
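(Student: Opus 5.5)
The plan is to reduce to Lemma~\ref{lem-transversal-for-thickF} via Observation~\ref{obs-splitgraphtau}. Fix a split partition $V(G)=K\cup I$ with $K$ a clique and $I$ independent. First I dispose of degenerate cases. If $K=\emptyset$, then $G$ is edgeless, its only MIS is $V(G)$, and $\tau(G)=1$. If $I=\emptyset$, then $G$ is a clique, every MIS is a single vertex, and $c$-hollowness fails once $n>1/c$. Likewise, if $G$ has a dominating vertex $v$, then $\{v\}$ is an MIS of size $1<cn$ for large $n$. So for $n$ sufficiently large we may assume $K,I\neq\emptyset$ and that $G$ has no dominating vertex, which are exactly the hypotheses of Observation~\ref{obs-splitgraphtau}.

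By Observation~\ref{obs-splitgraphtau}, $\tau(G)=\tau(\cF)$ where $\cF=\{\nc_I(v):v\in K\}$, viewed as a family of subsets of the ground set $I$. Each $\nc_I(v)$ equals $(\{v\}\cup\nc_I(v))\setminus\{v\}$, where $\{v\}\cup\nc_I(v)$ is an MIS. Hence $|\nc_I(v)|\ge cn-1$.

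The main obstacle is that Lemma~\ref{lem-transversal-for-thickF} needs sets that occupy a $c$-fraction of the ground set with the precise constant, and here we only have $|F|\ge cn-1$. However, the ground set is $I$ with $|I|\le n-|K|\le n-1$. I will therefore apply the lemma with ground set $I$ and density $c'=(cn-1)/|I|$. Since $|I|\le n-1$, we get $c'\ge (cn-1)/(n-1)$, and this is at least $c$ because $c\le 1$. As $-1/\log(1-x)$ is decreasing in $x$, the lemma gives $\tau(\cF)\le \frac{-1}{\log(1-c)}\log|\cF|+1$. A cleaner alternative is to run the greedy argument from the lemma's proof directly on the incidence structure: at each step some vertex of $I$ lies in at least a $(cn-1)/|I|\ge c$ fraction of the unhit sets, so the same bound follows.

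Finally, $|\cF|\le|K|\le n$, so $\log|\cF|\le\log n$ and therefore $\tau(G)\le\frac{-1}{\log(1-c)}\log n+1$. The only delicate point is the bookkeeping in the density step above, namely checking $(cn-1)/(n-1)\ge c$. This holds since $cn-1\ge cn-c$.
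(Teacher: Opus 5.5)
Your proposal has a genuine gap in the density step, which is exactly the point you flagged as delicate; the reduction itself via Observation~\ref{obs-splitgraphtau} to Lemma~\ref{lem-transversal-for-thickF} is the same as the paper's. The inequality $\frac{cn-1}{n-1}\ge c$ is equivalent to $cn-1\ge cn-c$, that is, to $c\ge 1$. So for $0<c<1$ it is false: for example $c=\tfrac12$, $n=100$ gives $\tfrac{49}{99}<\tfrac12$. Hence the bound $|I|\le n-1$ alone does not give every $F\in\cF$ a $c$-fraction of the ground set $I$. Neither the lemma with constant $c$ nor your direct greedy variant applies as stated, and the monotonicity of $-1/\log(1-x)$ then works against you. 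The same computation shows what is really needed: $\frac{cn-1}{n-|K|}\ge c$ holds exactly when $c|K|\ge 1$, so the failure occurs only when $K$ is small.

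The paper closes this with a case split on $|K|$.
\begin{itemize}
\item If $|K|<\frac{-1}{\log(1-c)}\log n$, then $K\cup\{u\}$ for any $u\in I$ is already a transversal, since every MIS either meets $K$ or equals $I$. Its size is at most $\frac{-1}{\log(1-c)}\log n+1$.
\item Otherwise $|I|\le n-\frac{-1}{\log(1-c)}\log n$, and $\frac{cn-1}{|I|}\ge c$ as soon as $\frac{-c}{\log(1-c)}\log n\ge 1$. This is where ``$n$ sufficiently large'' is genuinely used. The lemma together with $|\cF|\le n$ then finishes.
\end{itemize}
A threshold of $1/c$ would also work: in the small case use $\tau(G)=\tau(\cF)\le|\cF|\le|K|<1/c$, which is below the target bound for large $n$.

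Once you add this case split, the rest of your argument goes through unchanged: the degenerate cases, the bound $|\nc_I(v)|\ge cn-1$, and $\log|\cF|\le\log n$.
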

	\begin{proof}
Let $G=K\cup I$ be a $c$-hollow split graph on $n$ vertices. We may assume $|K|\geq \frac{-1}{\log (1-c)}\log n$. Otherwise, we are done since $K\cup\{u\}$ (for $u\in I$ chosen arbitrarily) is a transversal. As in Observation~\ref{obs-splitgraphtau}, define $\cF= \{\nc_I(v): v\in K\}$ and note $\tau(\cF)=\tau(G)$. Since $G$ is $c$-hollow, we have $|F|\geq cn-1$ for all $F\in \cF$. One checks that \[
\frac{|F|}{|I|}\geq \frac{cn-1}{n-\frac{-1}{\log (1-c)}\log n}\geq c
\]
for $n$ sufficiently large. By Lemma~\ref{lem-transversal-for-thickF}, $\tau(\cF)\leq \frac{-1}{\log (1-c)}\log n+1$.
\end{proof}

This establishes the upper bound in Theorem~\ref{thm-splitgraph}. Next, we address the lower bound. By Observation~\ref{obs-splitgraphtau}, the problem reduces to finding a construction where $\tau(\cF)$ is large and $|F|\geq cn$ for $|F|\in \cF$. This allows us to apply an argument similar to one appearing in \cite{alon1990transversal} to prove the following.
\begin{proposition}\label{prop-randomsplit-largetau}
	Let $0<c<1$. There exists a $c$-hollow split graph on $n$ vertices with \[\tau(G)\geq (1-o(1)) \frac{-1}{\log (1-c)} \log n.\] 
\end{proposition}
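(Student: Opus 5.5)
We will use a random split graph, in the spirit of Alon's random hypergraph argument. Fix $c$ and a slightly larger constant $c'$ with $c<c'<1$ (eventually we let $c'\searrow c$, which accounts for the $(1-o(1))$ factor). Let $I$ be an independent set of size $n$, let $K$ be a clique of size $s=n^{2}$ (or any polynomial in $n$ of larger degree; the total vertex count is then $N=n+s$), and form $G=K+_pI$ with each $v\in K$ nonadjacent to each $u\in I$ independently with probability $c'$. Equivalently, each $F_v=\nc_I(v)$ is a random subset of $I$ containing each vertex independently with probability $c'$. Hollowness needs care, because the MIS $\{v\}\cup\nc_I(v)$ has size about $c'n$, while $N\approx n^2$. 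So the clique must be small compared to $I$. We therefore instead take $|K|=s=n^{a}$ with $a<1$ close to $1$, say $s=n/\log n$. Then $N=(1+o(1))n$ and $\log s=(1-o(1))\log N$, which is all that matters.

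First, we show that $G$ is $c$-hollow whp. The MIS's are the sets $\{v\}\cup F_v$, plus $I$ itself if no vertex of $K$ is complete to $I$ (and $I$ has size $n\ge cN$ anyway). By Hoeffding, $\pr[|F_v|<c'' n]\le \exp(-\Omega(n))$ for any $c''<c'$. A union bound over the $s$ vertices of $K$ gives $|F_v|\ge c''n\ge cN$ for all $v$ whp, after choosing $c<c''<c'$ and using $N=(1+o(1))n$. The same estimate shows that no vertex of $K$ is dominating.

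Second, we show that $\tau$ is large whp. By Observation~\ref{obs-splitgraphtau}, $\tau(G)=\tau(\cF)$ with $\cF=\{F_v:v\in K\}$. Fix $T\subseteq I$ of size $t$. The events $\{F_v\cap T=\emptyset\}$ are independent over $v$, each of probability $(1-c')^t$. Hence
\[\pr[T\text{ hits every }F_v]=\left(1-(1-c')^t\right)^s\le \exp\left(-s(1-c')^t\right).\]
A union bound over the $\binom{n}{t}\le \exp(t\log n)$ choices of $T$ gives failure probability at most $\exp\left(t\log n - s(1-c')^t\right)$. Take $t=(1-\eta)\frac{\log s}{-\log(1-c')}$ for small $\eta>0$. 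Then $s(1-c')^t=s^{\eta}$, which is polynomial in $n$ and dominates $t\log n=O(\log^2 n)$. So whp $\tau(G)>t=(1-\eta)(1-o(1))\frac{\log N}{-\log(1-c')}$.

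Finally, let $\eta\to0$ and $c'\searrow c$ slowly with $n$; the continuity of $-1/\log(1-c)$ then yields $(1-o(1))\frac{-1}{\log(1-c)}\log N$. The main obstacle is balancing the parameters. We need $|K|$ small enough that $N\approx|I|$, so that hollowness survives with the sharp constant, and large enough that $\log|K|\sim\log N$. The choice $|K|=n/\log n$ does both. We also need $c'$ to approach $c$ slowly enough that the Hoeffding bound still beats the union bound; this is easy, since the deviation is $\Theta(n)$ as long as $c'-c\gg n^{-1/2}\sqrt{\log n}$.
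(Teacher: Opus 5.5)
Your proposal is correct and is essentially the paper's proof. It uses the same random split graph $K+_pI$ with $|K|=n/\log n$, $|I|=n$ and non-adjacency probability $c'$, then Hoeffding plus a union bound over $K$ for hollowness, and Alon's union bound over $t$-subsets $T\subseteq I$ for $\tau$; the paper simply fixes $t=\delta\log n-4\delta\log\log n$ and $c'=c(1+\frac{1}{\log n})+\sqrt{\log n/n}$ explicitly instead of sending $\eta\to 0$ and $c'\searrow c$. One correction to your closing remark: since $cN=cn+\frac{cn}{\log n}$, hollowness needs $c'n$ to exceed $cN$ by about $\sqrt{n\log n}$, i.e.\ $c'-c\gtrsim\frac{c}{\log n}$, not merely $c'-c\gg\sqrt{\log n/n}$ (for instance $c'=c+n^{-1/3}$ fails), though your ``slowly'' or a diagonal argument over fixed $c'>c$ covers this.
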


The proof (see Appendix~\ref{sec-appendix}) closely follows that in \cite{alon1990transversal} and is essentially a special case of the analysis for Theorem~\ref{thm-janson-construction}. The idea is to consider a random split graph $G=K+_p I$. We remark that similar problems are also analyzed in \cite{telelis2005absolute, vercellis1984probabilistic}. Interestingly, it is shown in \cite{telelis2005absolute} that a simple algorithm (with better runtime than greedy) performs within a $(1+o(1))$-factor of the greedy algorithm whp when the set system is appropriately random, as is the case here.

\section{Questions}\label{sec-discussion}

Conjecture~\ref{conj-maximal} remains wide open in general. The construction in Theorem~\ref{thm-janson-construction} shows that one cannot hope to show $\tau(G)=o\left(n^{1/3-\epsilon} \right)$ for general $c$-hollow graphs. In the next two subsections, we propose further directions of study. 

\subsection{Graphs with large and small values of $\beta(G)$ }
Recall that $i(G)$ is the size of a smallest MIS in $G$ and $\beta(G)\coloneqq \frac{\tau(G)i(G)}{n}$. Conjecture~\ref{conj-maximal} can be reformulated as follows. 
\begin{conjecture}[Reformulation of Conjecture~\ref{conj-maximal}]
    For all graphs $G$ on $n$ vertices, $\beta(G)=o(n)$. Note that this trivially holds when $i(G)=o(n)$, i.e., when $G$ is not $c$-hollow. 
\end{conjecture}
	
This leads naturally to the following question. 
	\begin{question}
	What is the maximum value of $\beta(G)$ over all $n$-vertex graphs $G$?  
	\end{question}
The largest we have managed to make $\beta(G)$ is $\Omega\left(\frac{n^{1/3}}{\log n}\right)$, which is achieved by the graph in Theorem~\ref{thm-janson-construction}. Recall that for cographs, we have $\beta(G)\leq 1$. Another interesting direction would be to find other graph classes where $\beta(G)$ is small. 
\begin{question}
What are some natural graph classes $\mathcal{G}$ with the property that for all $G\in \mathcal{G}$, $\beta(G)\leq C$ for some universal constant $C>0$?
\end{question}
Similar questions have been investigated in \cite{erdos1992covering,tuza1990covering}. Tuza showed in \cite{tuza1990covering} that $\beta(G)\leq 1$ holds for complements of strongly chordal graphs. Chordal graphs, on the other hand, do not have this property; the random split graphs (a subclass of chordal graphs) in Theorem~\ref{thm-splitgraph} have $\beta(G)=\Omega(\log n)$. 
	
	\subsection{The strong conjecture}
Conjecture~\ref{conj-maximal} asserts that $c$-hollow graphs admit $o(n)$-sized transversals for $\mis(G)$. Conjecture~\ref{conj-maximum} (the ``maximum conjecture'') asserts that graphs with $\alpha(G)\geq cn$ admit $o(n)$-sized transversals for the collection of maximum independent sets. We tentatively conjecture the following.
	\begin{conjecture}[Strong conjecture]\label{conj-strong}
		Let $0<c<1$ be a constant. All graphs $G$ on $n$ vertices admit $o(n)$-sized transversals for the set of large MIS's, i.e., 
        \[
		\mis_{\text{large}}(G)=\{I\in \mis(G): |I|\geq cn\}.
		\]
	\end{conjecture}
Note that Conjecture~\ref{conj-strong} implies both Conjecture~\ref{conj-maximal} and Conjecture~\ref{conj-maximum}, earning its name. A counterexample to this strong conjecture would be helpful in mapping out the boundaries of what one could hope to prove regarding the other two conjectures.

\appendix
\section{Appendix}\label{sec-appendix}

\begin{fact}\label{fact1}
Let $x\in \R$. Then $\left(1-\frac{x}{m}\right)^m = e^{-x}\left(1-\frac{x^2}{2m}+O\left(\frac{1}{m^2}\right)\right)$ as $m\to\infty$. 
\end{fact}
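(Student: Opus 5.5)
We use the Taylor expansion of the logarithm. Since $x$ is fixed and $m\to\infty$, eventually $|x/m|<1/2$. Then
\[
m\log\!\left(1-\frac{x}{m}\right) = m\left(-\frac{x}{m}-\frac{x^2}{2m^2}-\frac{x^3}{3m^3}-\cdots\right) = -x-\frac{x^2}{2m}+O\!\left(\frac{1}{m^2}\right).
\]
The tail $\sum_{j\ge 3} \frac{|x|^j}{j\,m^{j-1}}$ is bounded by a geometric series with ratio $|x|/m\le 1/2$. It is therefore at most $2|x|^3/(3m^2)=O(1/m^2)$, with the implied constant depending only on $x$.

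Next we exponentiate:
\[
\left(1-\frac{x}{m}\right)^m=e^{-x}\exp\!\left(-\frac{x^2}{2m}+O(m^{-2})\right).
\]
Set $y=-\frac{x^2}{2m}+O(m^{-2})$, so that $y=O(1/m)$. Using $e^{y}=1+y+O(y^2)$ for $|y|\le 1$, we get $e^{y}=1-\frac{x^2}{2m}+O(m^{-2})$, because $O(y^2)=O(m^{-2})$. Multiplying by $e^{-x}$ gives the claim.

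The only step needing care is making the error terms uniform. Both remainder bounds, the logarithmic tail and the $O(y^2)$ term of the exponential, are explicit once $m\ge 2|x|$, so each is controlled by a constant depending only on $x$. No other obstacle is expected.
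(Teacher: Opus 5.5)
Your proof is correct and follows the same route as the paper: expand $m\log(1-x/m)$ by the Taylor series of the logarithm to get $-x-\frac{x^2}{2m}+O(m^{-2})$, then exponentiate and expand $e^{y}$ to first order. Your geometric-series bound on the logarithmic tail (valid once $m\ge 2|x|$) makes the dependence of the $O(m^{-2})$ constants on $x$ more explicit than the paper does, but the argument is the same.
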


\begin{proof}
    Put $y=\left(1-\frac{x}{m}\right)^m$. Then \begin{align*}
        \log y&= m \log \left(1-\frac{x}{m}\right)\\
        &= m \left(-\frac{x}{m}-\frac{x^2}{2m^2}-\frac{x^3}{3m^3}-\dots\right)\\
        &=-x -\frac{x^2}{2m}-O\left(\frac{1}{m^2}\right),
    \end{align*}
    so we have \begin{align*}
        y &= \exp\left(-x -\frac{x^2}{2m}-O\left(\frac{1}{m^2}\right) \right)\\
        &= e^{-x}\exp\left(-\frac{x^2}{2m}-O\left(\frac{1}{m^2}\right)\right)\\
        &= e^{-x}\left(1 -\frac{x^2}{2m}-O\left(\frac{1}{m^2}\right) + \frac{1}{2!}\left(-\frac{x^2}{2m}-O\left(\frac{1}{m^2}\right)\right)^2 + \dots \right)\\
        &= e^{-x}\left(1- \frac{x^2}{2m} + O\left(\frac{1}{m^2}\right)\right).
    \end{align*}
\end{proof}

\begin{fact}\label{fact2}
In the proof of Theorem~\ref{thm-janson-construction}, we have
\[
\left(1- 2(1-p)^m + (1-p)^{2m}(1-p)^{-r}\right)^t
\leq (1+o(1)) \left(1- 2\gamma + \gamma^2(1-p)^{-r}\right)^t.
\]
\end{fact}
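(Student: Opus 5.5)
We need to show $\left(1-2q+q^2(1-p)^{-r}\right)^t \le (1+o(1))\left(1-2\gamma+\gamma^2(1-p)^{-r}\right)^t$, where $q=(1-p)^m$, $p=\log(1/\gamma)/m$, $t=m/(10\log n)$ and $0\le r\le m$. The plan is to compare the two bases directly. We will show that their difference is $O(1/m)$, uniformly in $r$, and that the right-hand base is bounded below by a positive constant. Raising to the power $t=o(m)$ then costs only a factor $1+o(1)$.

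\textbf{Step 1: locate $q$ relative to $\gamma$.} Apply Appendix Fact~\ref{fact1} with $x=\log(1/\gamma)$:
\[
q=\gamma\left(1-\tfrac{x^2}{2m}+O(m^{-2})\right),
\]
so $q=\gamma-\eta$ with $\eta=\Theta(1/m)>0$. Also $q\le\gamma$, which is consistent with $q\nearrow\gamma$ noted in the proof.

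\textbf{Step 2: bound the difference of the bases.} Set $s=(1-p)^{-r}$, so that $1\le s\le (1-p)^{-m}=1/q$. Then
\[
(1-2q+q^2s)-(1-2\gamma+\gamma^2 s)=2(\gamma-q)-(\gamma^2-q^2)s=(\gamma-q)\bigl(2-(\gamma+q)s\bigr).
\]
The factor $2-(\gamma+q)s$ is at most $2$, because $s\ge 1$ and $\gamma+q>0$. Hence the difference is at most $2\eta=O(1/m)$. If the difference is negative the inequality is immediate, so we only need this upper bound.

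\textbf{Step 3: lower-bound the right-hand base.} For every $r$,
\[
1-2\gamma+\gamma^2 s\ \ge\ 1-2\gamma+\gamma^2=(1-\gamma)^2=A>0,
\]
and $A$ is a constant because $\gamma<1$ is fixed.

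\textbf{Step 4: conclude.} Combining Steps 2 and 3, the ratio of the left base to the right base is at most $1+2\eta/A=1+O(1/m)$. Raising to the $t$-th power gives a ratio of at most
\[
\exp\bigl(O(t/m)\bigr)=\exp\bigl(O(1/\log n)\bigr)=1+o(1),
\]
and this holds uniformly in $r$. That uniformity is exactly what is needed to pull the $(1+o(1))$ factor outside the sum over $r$ in the estimate of $\Delta$.

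The main obstacle is keeping the bound uniform in $r$, because $s$ can be as large as $1/q\approx 1/\gamma$. The factorization in Step 2 handles this: it shows the error never grows with $s$. The only other input needed is that $\gamma-q$ is of order $1/m$, which Fact~\ref{fact1} supplies. The crude bound $1-x\le e^{-x}$ gives only $q\le\gamma$, which is not enough to control the size of $\gamma-q$.
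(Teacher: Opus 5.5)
Your proof is correct and follows the same route as the paper's: use Fact~\ref{fact1} to show the two bases differ by $O(1/m)$, then note that $(1+O(1/m))^t=\exp(O(1/\log n))=1+o(1)$ because $t=m/(10\log n)$. Your factorization $(\gamma-q)\bigl(2-(\gamma+q)s\bigr)$ and the lower bound $(1-\gamma)^2$ on the right-hand base make explicit the uniformity in $r$ and the constant $C(\gamma)$, which the paper asserts without detail.
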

\begin{proof}
    Using Fact~\ref{fact1} with $x=\log(1/\gamma)$ and cleaning up, we obtain \[
    1-2(1-p)^m+(1-p)^{2m}(1-p)^{-r} = 1-2\gamma+\gamma^2(1-p)^{-r}+O\left(\frac{1}{m}\right).
    \]
    Then the ratio
    \begin{align*}
    R_m&= \frac{\left(1- 2(1-p)^m + (1-p)^{2m}(1-p)^{-r}\right)^t}{\left(1- 2\gamma + \gamma^2(1-p)^{-r}\right)^t} \\
    &= \left(\frac{1- 2\gamma + \gamma^2(1-p)^{-r}+O(1/m)}{1- 2\gamma + \gamma^2(1-p)^{-r}}\right)^t \leq\left(1+ \frac{C}{m}\right)^t 
    \end{align*}
    for some absolute constant $C=C(\gamma)$. As $t=\frac{m}{10\log n}$, $R_m\leq \exp(\frac{C}{m}\frac{m}{10\log n})= (1+o(1))$. 
\end{proof}

\begin{fact}\label{fact3}
    In the proof of Theorem~\ref{thm-janson-construction}, for $m$ sufficiently large, $(1-p)^{-r}\leq C\frac{r}{m}+1$ for some fixed constant $C=C(\gamma)$.
\end{fact}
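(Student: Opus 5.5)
We need to prove Fact 3: $(1-p)^{-r}\le C r/m + 1$ for $0\le r\le m$, where $p=\log(1/\gamma)/m$ and $m$ is sufficiently large.

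The plan is to use convexity of the function $f(r)=(1-p)^{-r}=\exp(r\lambda)$, where $\lambda:=-\log(1-p)>0$. Since $f$ is convex in $r$, on the interval $[0,m]$ it lies below the chord joining its endpoints. That gives
\[
(1-p)^{-r}\le 1+\frac{r}{m}\left((1-p)^{-m}-1\right)\qquad\text{for } 0\le r\le m .
\]
So it suffices to bound $(1-p)^{-m}-1$ by a constant $C=C(\gamma)$ once $m$ is large.

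For this, note that $(1-p)^{m}=\left(1-\frac{\log(1/\gamma)}{m}\right)^m\to\gamma$ by Fact~\ref{fact1} (with $x=\log(1/\gamma)$). Hence $(1-p)^{-m}\to 1/\gamma$. Therefore, for $m$ sufficiently large, $(1-p)^{-m}-1\le \frac{2}{\gamma}-1$, say, and we may take $C=2/\gamma$. This constant depends only on $\gamma$, as required.

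Alternatively, one can avoid limits altogether. Using $-\log(1-p)\le p+p^2$ for $p\le 1/2$, which holds once $m\ge 2\log(1/\gamma)$, we get $(1-p)^{-m}\le \exp(mp+mp^2)\le \exp(2\log(1/\gamma))=\gamma^{-2}$ for large $m$, so $C=\gamma^{-2}$ also works.

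There is no real obstacle here. The only point needing care is that the bound must hold uniformly over all $0\le r\le m$, not just for small $r$. The chord argument handles this directly, which is why we use convexity rather than a first-order Taylor bound in $r$.
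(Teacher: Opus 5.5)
Your proof is correct and follows essentially the same route as the paper. Both arguments combine the convexity (chord) bound for an exponential on $r/m\in[0,1]$ with the fact that $-m\log(1-p)$ is bounded by a constant depending only on $\gamma$. The only difference is the order of the steps: the paper first bounds this quantity by $D(\gamma)$ and then applies the chord bound to $\exp(Dx)$, while you apply the chord bound directly and then bound $(1-p)^{-m}-1$.
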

\begin{proof}
    Write \[
    (1-p)^{-r} = \exp\left(-r \log(1-p)\right) = \exp\left(-m\log\left(1-\frac{\log(1/\gamma)}{m}\right) \frac{r}{m}\right).
    \]
By using the Taylor expansion for $-\log(1-x)$, one can bound $-m\log\left(1-\frac{\log(1/\gamma)}{m}\right)$ from above by some fixed constant $D=D(\gamma)$. So, $(1-p)^{-r}\leq \exp(D\frac{r}{m})$. Note $\frac{r}{m}\in [0,1]$. As $\exp(Dx)$ is convex on $[0,1]$, we obtain $\exp(D\frac{r}{m})\leq (e^D-1)\frac{r}{m}+1$. Taking $C=(e^D-1)$ proves the claim.
\end{proof}

\begin{proof}[Proof of Proposition \ref{prop-randomsplit-largetau}] 
Let $0<c<1$ and let $G=K +_p I$ be a split graph with $|K|=\frac{n}{\log n}$, $|I|=n$, and $p=1-c'$ (we determine $c' \approx c$ later). So, $|V(G)|=n\left(1+\frac{1}{\log n}\right)$. Let $\cF=\{\nc_I(v): v\in K\}$ denote the collection of non-neighborhoods. By Observation~\ref{obs-splitgraphtau}, $\tau(G)=\tau(\cF)$. We prove $\tau(\cF)$ is large by a similar argument given in the proof of Theorem~\ref{thm-janson-construction}. Put $\delta = \frac{-1}{\log p}$. Fix $T\subseteq I$ with $t\coloneqq |T|= \delta\log n-4\delta \log \log n$. For $F\in \cF$, we have $\pr\{F\cap T=\emptyset\}= p^t$. Since the collection of events $\big\{ \{F\cap T\neq \emptyset \}: F\in \cF\big\}$ is mutually independent, we have\[
\pr\left[\bigwedge_{F\in \cF} \{F\cap T\neq \emptyset \}\right]= (1- p^t)^{|K|}\leq \exp\left(-p^t|K|\right).
\] 
By the union bound, \begin{align*}
\pr\left[ \tau(\cF)\leq t \right]&\leq \binom{n}{t}\exp\left(-p^t|K|\right)\\
&\leq n^t\exp\left(-p^t|K|\right)= \exp\left(-p^t|K| + t\log n\right)\\
 &=\exp\left(\frac{-n}{\log n}p^{\delta\log n-4\delta\log\log n}+ t\log n \right)\\
    &= \exp\left(\frac{-n}{\log n}\exp\left(\delta\log p\log n-4\delta\log p\log\log n\right)+ t\log n\right)\\
    &= \exp\left(-(\log n)^{-4\delta\log p-1}+ t\log n \right)\\
    &= \exp\left(-(\log n)^{3}+ \delta(\log n)^2-4\delta\log n\log\log n \right),
\end{align*}
which tends to 0 as $n\to \infty$. This establishes that $\tau(G)> (1-o(1))\frac{-1}{\log (1-c')}\log n$ whp. We now choose $c'= c(1+\frac{1}{\log n})+\sqrt{\frac{\log n}{n}}$ so that $c'n - \sqrt{n\log n}=c|V(G)|$. Fix $v\in K$. Notice the random variable $|\nc_I(v)|$ is a sum of $n$ Bernoulli random variables with probability $c'$. Applying Hoeffding's inequality of the form $\pr\left(\E[S_n]-S_n \geq \lambda \right) \leq \exp(-2\lambda^2/n)$ with $\lambda=\sqrt{n\log n}$, we deduce that
\[
\pr\left[ |\nc_I(v)| \leq c|V(G)|\right]\leq \exp\left(-2\log n\right) = \frac{1}{n^2}.
\]
By applying the union bound over all $\frac{n}{\log n}$ sets in $\cF$, we conclude that $G$ is $c$-hollow whp. (The maximal independent set $I$ does not cause problems since $|I|=n\geq cn(1+\frac{1}{\log n})$ as $n\to\infty$.) With this choice of $c'$, we also have 
\begin{align*}
    \tau(G)&\geq (1-o(1)) \frac{- \log n}{\log(1-c') }\\
    &= (1-o(1)) \frac{-\log n}{\log\left(1- c(1+\frac{1}{\log n})-\sqrt{\frac{\log n}{n}}\right) }\\
    &= (1-o(1))\frac{-\log n}{\log(1-c)},
\end{align*}
which completes the proof. 
\end{proof}

\bibliographystyle{plain}
\bibliography{refs.bib}

@article{moon1965,
	title={On cliques in graphs},
	author={Moon, John W and Moser, Leo},
	journal={Israel Journal of Mathematics},
	volume={3},
	pages={23--28},
	year={1965},
	publisher={Springer}
}

@article{alon2021hitting,
  title={Hitting all maximum independent sets},
  author={Alon, Noga},
  journal={arXiv preprint arXiv:2103.05998},
  year={2021}
}

@article{cheng2024sublinear,
  title={Sublinear hitting sets for some geometric graphs},
  author={Cheng, Xinbu and Huang, Xinqi and Rong, Mingyuan and Xu, Zixiang},
  journal={arXiv preprint arXiv:2404.10379},
  year={2024}
}

@article{alon1990transversal,
  title={Transversal numbers of uniform hypergraphs},
  author={Alon, Noga},
  journal={Graphs and Combinatorics},
  volume={6},
  number={1},
  pages={1--4},
  year={1990},
  publisher={Springer}
}

@article{tuza1990covering,
  title={Covering all cliques of a graph},
  author={Tuza, Zsolt},
  journal={Discrete Mathematics},
  volume={86},
  number={1-3},
  pages={117--126},
  year={1990},
  publisher={Elsevier}
}

@article{hajebi2024hitting,
  title={Hitting all maximum stable sets in {$P_5$}-free graphs},
  author={Hajebi, Sepehr and Li, Yanjia and Spirkl, Sophie},
  journal={Journal of Combinatorial Theory, Series B},
  volume={165},
  pages={142--163},
  year={2024},
  publisher={Elsevier}
}

@techreport{miller1960problem,
  title={A problem of maximum consistent subsets},
  author={Miller, Raymond E and Muller, David E},
  year={1960},
  institution={IBM Research Report RC-240, JT Watson Research Center, Yorktown Heights, NY}
}

@article{erdos1992covering,
  title={Covering the cliques of a graph with vertices.},
  author={Erd\H{o}s, Paul and Gallai, Tibor and Tuza, Zsolt},
  journal={Discret. Math.},
  volume={108},
  number={1-3},
  pages={279--289},
  year={1992}
}

@book{chung1998erdos,
  title={Erd\H{o}s on Graphs: His Legacy of Unsolved Problems},
  author={Chung, Fan and Graham, Ron},
  year={1998},
  publisher={AK Peters/CRC Press}
}

@article{hajnal1965theorem,
  title={A theorem on $k$-saturated graphs},
  author={Hajnal, Andr{\'a}s},
  journal={Canadian Journal of Mathematics},
  volume={17},
  pages={720--724},
  year={1965},
  publisher={Cambridge University Press}
}

@article{ai2024piercing,
  title={Piercing independent sets in graphs without large induced matching},
  author={Ai, Jiangdong and Liu, Hong and Xu, Zixiang and Zhou, Qiang},
  journal={arXiv preprint arXiv:2403.19737},
  year={2024}
}

@article{goddard2013independent,
  title={Independent domination in graphs: A survey and recent results},
  author={Goddard, Wayne and Henning, Michael A},
  journal={Discrete Mathematics},
  volume={313},
  number={7},
  pages={839--854},
  year={2013},
  publisher={Elsevier}
}

@article{janson1998new,
  title={New versions of {S}uen's correlation inequality},
  author={Janson, Svante},
  journal={Random Structures and Algorithms},
  volume={13},
  number={3-4},
  pages={467--483},
  year={1998}
}

@article{erdos3problems,
  title={Problems and results on set systems and hypergraphs},
  author={Erd\H{o}s, Paul},
  journal={Bolyai Soc. Math. Stud.},
  volume={3},
  pages={217--227},
  year={1991}
}

@article{cheng2025bollobas,
  title={Bollob{\'a}s-{Erd{\H{o}}s}-{Tuza} Conjecture for Graphs With No Induced {$K_{s,t}$}},
  author={Cheng, Xinbu and Xu, Zixiang},
  journal={Journal of Graph Theory},
  volume={109},
  number={4},
  pages={514--517},
  year={2025},
  publisher={Wiley Online Library}
}

@book{alon2016probabilistic,
  title={The probabilistic method},
  author={Alon, Noga and Spencer, Joel H},
  year={2016},
  publisher={John Wiley \& Sons}
}

@article{hoeffding1963probability,
  title={Probability inequalities for sums of bounded random variables},
  author={Hoeffding, Wassily},
  journal={Journal of the American Statistical Association},
  volume={58},
  number={301},
  pages={13--30},
  year={1963},
  publisher={Taylor \& Francis}
}

@article{telelis2005absolute,
  title={Absolute $o(\log m)$ error in approximating random set covering: an average case analysis},
  author={Telelis, Orestis A and Zissimopoulos, Vassilis},
  journal={Information Processing Letters},
  volume={94},
  number={4},
  pages={171--177},
  year={2005},
  publisher={Elsevier}
}

@article{vercellis1984probabilistic,
  title={A probabilistic analysis of the set covering problem},
  author={Vercellis, Carlo},
  journal={Annals of Operations Research},
  volume={1},
  number={3},
  pages={255--271},
  year={1984},
  publisher={Springer}
}

@article {lovasz75,
    AUTHOR = {Lov\'asz, L.},
     TITLE = {On the ratio of optimal integral and fractional covers},
   JOURNAL = {Discrete Math.},
  FJOURNAL = {Discrete Mathematics},
    VOLUME = {13},
      YEAR = {1975},
    NUMBER = {4},
     PAGES = {383--390},
      ISSN = {0012-365X,1872-681X},
   MRCLASS = {05B40},
  MRNUMBER = {384578},
MRREVIEWER = {Torrence\ D.\ Parsons},
       DOI = {10.1016/0012-365X(75)90058-8},
       URL = {https://doi.org/10.1016/0012-365X(75)90058-8},
}
\end{document}